\newcommand{\E}{\mathbb{E}}
\newcommand{\hz}{{\widehat{\mathbf{z}}_n}}
\newcommand{\n}[2]{n_{#1#2}}
\newcommand{\na}[1]{n_{#1}}
\renewcommand{\o}[2]{o_{#1#2}}
\newcommand{\Q}[1]{Q_{\textsc{ml}}(#1)}
\newcommand{\Qb}[1]{Q_{\textsc{icl}}(#1)}
\newcommand{\1}{\mathbf{1}}
\newcommand{\diag}{\text{Diag}}
\newcommand{\one}{\mathbf{1}}
\newcommand{\z}{\mathbf{z}_n}
\newcommand{\e}{\mathbf{e}_n}
\newcommand{\Z}{{\mathbf{Z}_n}}
\newcommand{\A}{\mathbf{A}_{n\times n}}
\renewcommand{\a}{\mathbf{a}_{n\times n}}
\newcommand{\smi}{s_{\min}}
\newcommand{\sma}{s_{\max}}
\newcommand{\F}{\mathcal F(n,\alpha)}
\renewcommand{\P}{\mathbb{P}}
\DeclareMathOperator*{\argmax}{arg\,max}  
\numberwithin{equation}{section}
\theoremstyle{plain}
\newtheorem{theorem}{Theorem}[section]
\newtheorem{lemma}[theorem]{Lemma}
\newtheorem{corollary}[theorem]{Corollary}
\newtheorem{proposition}[theorem]{Proposition}
\newtheorem{remark}[theorem]{Remark}
\begin{document}

\begin{frontmatter}



\title{Optimal recovery by maximum and integrated conditional likelihood in the general Stochastic Block Model\footnote{In memory of Antonio Galves, whose insatiable curiosity and generous guidance will forever inspire our work.
}} 


\author[1]{Andressa Cerqueira}
\author[2]{Florencia Leonardi}

\affiliation[1]{organization={Departament of Statistics, Universidade Federal de S\~ao Carlos},
            city={São Carlos},
            state={São Paulo},
            country={Brazil}}
\affiliation[2]{organization={Department of Statistics, Universidade de S\~ao Paulo},
            city={São Paulo},
            state={São Paulo},
            country={Brazil}}

\begin{abstract}
In this paper, we obtain new results on the weak and strong consistency of the maximum and integrated conditional likelihood estimators for the community detection problem in the Stochastic Block Model with $k$ communities and unknown parameters. In particular we show that maximum conditional likelihood achieves the optimal known threshold for exact 
recovery in the logarithmic degree regime. For the integrated conditional likelihood, we obtain a sub-optimal constant, but still obtain strong consistency in the logarithmic degree regime. Both methods are shown to be weakly consistent in the divergent degree regime. These results  fill in the gap in the theory of community detection with maximum likelihood and integrated conditional likelihood, solving open problems in the literature.
\end{abstract}

%

\begin{keyword}
SBM\sep profile likelihood estimator\sep model selection
\end{keyword}

\end{frontmatter}

\section{Introduction}

Network analysis has become an active area of research aimed at describing and model random interactions between pairs of actors in a population.  The network's nodes represent the actors, while the edges represent the interactions between them. Network analysis is applied nowadays to several fields, such as social sciences, biology, computer science, neuroscience, and many others. Community detection is an important task in network analysis, and it is related to identifying groups of nodes in an observed network based on their connectivity patterns. 

The Stochastic Block Model (SBM) proposed by \citet{holland1983stochastic} is a statistical model for networks that incorporates the community structure of the nodes. In an SBM, the nodes are partitioned randomly into communities, and the existence of an edge between pairs of nodes depends on their community membership. More precisely, the existence of the edges in the network is modeled by independent Bernoulli random variables, conditionally on the nodes' memberships, with parameters depending only on the community assignments. In this way, the SBM can be seen as a latent variable model, as only the edges are observed, and its statistical analysis is related to that of the perturbed chains with unbounded variable memory \cite{collet-etal2008}.

In the recent years, the classical SBM has been generalised to consider other aspects of the network, such as heterogeneity of the node's degree \citep{karrer2011stochastic, cerqueira2024}, nodes belonging to more than one community \citep{latouche2011overlapping,airoldi2008mixed}, networks that evolve on time \citep{hero2014dynamic,matias2017statistical}. But regardless of the several extensions, and due to its simplicity, much recent research has focused on the recovery of the communities for the classical SBM. An incomplete list includes Newman-Girvan modularity \citep{newman-girvan2002}, maximum likelihood-based methods \citep{bickel2009nonparametric,amini2013pseudo}, spectral clustering \citep{lei-rinaldo2015,mossel2016}, semidefinite programming relaxation of maximum likelihood \citep{abbe2015exact} and  a Bayesian estimator \citep{van2018bayesian}. 

Given the large number of community recovery methods proposed in the literature, it is natural to inquire about necessary and sufficient conditions guaranteeing the recovery of the true community assignments. The recovery of the communities has been studied in two main aspects: almost exact and exact recovery. Almost exact  recovery (also called weak consistency) means that an increasing fraction of the node's communities is recovered, with a high probability, when the network size increases. In contrast, exact recovery (strong consistency) means that all the nodes' communities are recovered with a high probability when the network size increases. These notions are defined up to some permutation of the true labels, as exact determination of the node's labels is impossible. 

In recent works, \citet{abbe2015exact} and \citet{mossel2016}  established theoretical conditions and thresholds for  almost exact and exact community recovery in the two symmetric communities model, also known as planted bisection model, and propose algorithms attaining these thresholds. The planted bisection model is a simplified version of SBM, consisting of two equal-sized communities and two parameters: $\rho_n s_1$ for the within-community edge probability and $\rho_n s_2$ for the between-community edge probability, where $\rho_n \to 0$, controls the sparsity of the graph as the number of nodes increases and $s_1,s_2>0$. They assume the model's parameters are known. For almost exact recovery, the necessary and sufficient condition is that $n\rho_n\to\infty$ as $n\to\infty$ \citep{mossel2016}. For exact recovery, both works established a sharp phase transition threshold when $\rho_n = \log n / n$, which corresponds to the minimal degree regime required for strong consistency, under the condition that
\[
(\sqrt{s_1} - \sqrt{s_2})^2 \geq 2.
\]

For the general SBM with $k$ communities and known parameters, \citet{abbe-colin2015} proved a phase transition phenomenon under the regime $\rho_n = \log n / n$, which is characterized by the Chernoff–Hellinger (CH) divergence. They showed that exact recovery is possible when the minimum of the CH divergence is  greater than or equal to one. Differing from earlier works, \citet{NIPS2015_abbe_sandon} consider the problem of recovering communities when the model parameters are unknown. They propose an algorithm that achieves the optimal information-theoretic threshold characterized by the CH divergence, as established in \citet{abbe-colin2015}.

From a statistical perspective, many estimators are, in general, derived using approaches based on maximum likelihood and Bayesian methods. These methods can similarly be applied to community recovery, with the key advantage that they do not require prior knowledge of the model parameters.  However, concerning maximum likelihood and Bayesian estimators, the strong consistency results for community detection known so far have been relatively limited. In particular, \citet{bickel2009nonparametric} proved the 
strong consistency of the maximum likelihood estimator in the regime $\rho_n \gg \log n/n$. 
From the Bayesian  perspective, \citet{van2018bayesian} proved that the posterior mode is a 
strongly consistent estimator of the communities when $\rho_n \gg (\log n)^2/n$. Even though the 
latter work has a stronger condition than  \citet{bickel2009nonparametric}, the authors pointed out 
that the former work assumed a global Lipschitz condition that does not generally hold. For that 
reason, and by the relation of both estimators, up to this moment,  the strong consistency of the 
maximum likelihood and Bayesian estimators are only verified in the regime $\rho_n \gg (\log n)^2/
n$. Moreover, in these works, the optimality of the maximum likelihood and Bayesian estimators has not been established.

In this work, we study the maximum and integrated conditional likelihood estimators for the general 
SBM with $k$ communities not necessarily of the same size and without assuming known 
parameters. The integrated conditional likelihood estimator is a modified version of the Bayesian estimator proposed in \citet{van2018bayesian}, attaining an almost optimal rate of convergence. We prove that weak consistency can be obtained for both methods under the same necessary regime  $n\rho_n\to\infty$ as $n\to\infty$ established by \citet{mossel2016}. Moreover, we prove that the maximum and integrated conditional likelihood estimators are strongly consistent under the regime  $\rho_n\geq \log n/n$, with a condition on the parameters. 
In the case $\rho_n = \log n / n$, we establish the optimality of the maximum likelihood estimator under the same condition characterized by the CH divergence, as introduced by \citet{abbe-colin2015}.  For the integrated conditional likelihood estimator, we obtain a sub-optimal condition on the constant, but still prove the consistency under the phase transition regime $\rho_n = \log n/n$. The proof of the strong consistency of these estimators is derived using a new concentration result based on the general Chernoff bound. With these results, we show that the maximum and integrated likelihood estimators are weakly and strongly consistent in both optimal and sub-optimal regimes, extending the contributions of  \citet{bickel2009nonparametric} and  \citet{van2018bayesian}. 

This paper is organized as follows. In Section~\ref{defs} we introduce the main definition of the model and of the estimators, and state a result relating both estimators.  In Section~\ref{sec:consistency} we state the main results concerning weak and strong consistency. An empirical analysis of the performance of both estimators is presented in Section~\ref{sec:empirical}. In Section~\ref{proofs}, we present the proofs of the main theorems related to the strong and weak consistency of these estimators. Section~\ref{discussion} presents a brief discussion, and in the Appendix, we state and prove several auxiliary results, including the new concentration result based on the Chernoff bound. 

\section{Community detection in Stochastic Block Models}\label{defs}

The SBM with $n$ nodes and $k$ communities can be described by the pair of random structures $(\Z,\A)$, where $\Z$ denotes the community assignment of the nodes and $\A$ is the adjacency matrix representing the network. The random vector $\Z=(Z_1,Z_2,\dots,Z_n)$ of community assignments is composed by independent and identically distributed random variables with distribution $\pi$ over $[k]=\{1,2,\dots,k\}$. 
Given $\Z=\z$, the law of the adjacency matrix $\A$ is a product  of Bernoulli random variables whose parameters depend only on the nodes' labels. More formally, there exists a symmetric probability matrix  $P \in [0,1]^{k\times k}$ such that 
the conditional distribution of $\A=\a$ given $\Z=\z$ can be written as 
\begin{equation}\label{def-prob}
\P(\a|\z) = 
\prod_{1\leq a, b\leq k} \!\!P_{ab}^{\o{a}{b}/2} (1-P_{ab})^{(\n{a}{b}-\o{a}{b})/2}\,,
\end{equation}
where the counters $n_a=n_a(\z)$, $\n{a}{b}=\n{a}{b}(\z)$ and $\o{a}{b}=\o{a}{b}(\z,\a)$ are given by
\begin{align*}
n_a(\z) &= \sum\limits_{i=1}^n \mathds{1}\{z_i=a\}\, , \qquad\quad\;\, 1 \leq a \leq k\,,\\
\n{a}{b}(\z) &=\begin{cases}
n_a(\z)n_b(\z)\, ,& 1 \leq a < b \leq k,\\
n_a(\z)(n_a(\z)-1)\,, & 1 \leq a=b \leq k,\,
\end{cases}
\end{align*}
and  
\[
\o{a}{b}(\z,\a) =  \sum\limits_{1\leq i, j\leq n} \mathds{1}\{z_i=a,z_j=b\}a_{ij}\,.
\]

As it is usual in the definition of likelihood functions, by convention, we define $0^0=1$ in \eqref{def-prob} when some of the parameters are 0. 

For any $a\in[k]$, the expected number of nodes in community $a$ is given by $n\pi_a$. When $\pi_a=1/k$ for all $a\in[k]$, we say that the network is \textit{balanced}. However, since $\Z$ is randomly generated, the community sizes may differ even in the balanced case.
Given a matrix $P$ and a node in the community $a$, its expected degree is given by $n\sum_{b}\pi_bP_{ab}$. Then, assuming $k$ and $\pi$ fixed, the expected degree of a node scales on the order of $n$ times the connection probabilities given by $P$.
When $P$ is fixed with bounded entries, the expected degree grows linearly with $n$ and the network is \emph{dense}. However, in general, observed networks do not have too many edges, so the interesting regimes are those where $P$ decreases with the number of nodes $n$ and the expected degree is of order much smaller than $n$.
To emphasize the dependence on $n$, we reparametrize $P$ by writing $P = \rho_n S$, where $S$ is a matrix with all entries strictly positive.  For simplicity in the presentation of the results, we assume all entries in $P$ have the same asymptotic order $\rho_n$, so we can define the values
\[
\smi = \min_{a,b} S_{ab}\quad \text{ and }\quad \sma = \max_{a,b} S_{ab}
\]
and take them as strictly positive constants not depending on $n$. 
We are interested in both dense and sparse networks. In particular, we consider the dense case, where $\rho_n = 1$, and the sparse case, where the sequence $\{\rho_n\}_{n \in \mathbb{N}}$ satisfies $\rho_n \to 0$ as $n \to \infty$.

In this paper, we obtain conditions over $(\pi, \rho_nS)$ in order to obtain weak and strong consistency of the maximum and integrated conditional likelihood estimators.  To guarantee the identifiability of the model, we make the assumption that all entries of $\pi$ are positive and that the matrix $S$ does not contain any two identical columns, as it is usually assumed in the literature. 

Given any community labels $\z\in[k]^n$, the maximum likelihood estimators for $P_{ab}$ can be easily obtained by maximizing \eqref{def-prob},  and they are given by $\o{a}{b}(\z,\a)/\n{a}{b}(\z)$. Using these estimators, we can write the \emph{likelihood modularity}  for any $\z\in[k]^n$ as 
\begin{equation}\label{def-P-hat}
\Q{\z} = \frac1{2n^2}\sum_{1\leq a, b\leq k} \n{a}{b}(\z) \tau\Bigl(  \dfrac{\o{a}{b}(\z,\a)}{\n{a}{b}(\z)}\Bigr)\,
\end{equation}
with $\tau(x) = x\log x + (1-x)\log (1-x)$. 
Given the observed graph $\a$, to estimate the community labels in $[k]^n$ we define the maximum likelihood (ML) estimator by
\begin{equation}\label{zstar}
\hz^{\textsc{ml}}= \argmax\limits_{\z \in \mathcal F(n,\alpha)}\left\lbrace \Q{\z} \right\rbrace\,,
\end{equation}
where, for some $\alpha>0$
\begin{equation}\label{calF}
\mathcal F(n,\alpha) =\{\z\in [k]^n\colon n_a(\z)\geq \alpha n \text{ for all }a\in [k]\}\,.
\end{equation}

The set $\mathcal F(n,\alpha)$ contains all community assignments such that the smallest community has a size at least $\alpha n$. This allows us to consider the community detection problem in the case of \textit{unbalanced} communities, that is, communities with different sizes. For networks with balanced communities, we have $\alpha = 1/k$. In this work, we assume $\alpha$ positive with the only restriction that $\alpha < \min_a \pi_a$. 
It is worth noting that requiring communities to have linear size does not seem too restrictive, since sublinear-sized communities would eventually ``disappear'' and thus would not fit within a fixed $k$-community model. On the other hand, this restriction appears as a necessary condition when considering the logarithm degree regime, even for weak consistency, see Theorem~\ref{teo_basico1}.

From a statistical point of view, another related approach to estimate the communities, from a Bayesian perspective, is the integrated conditional likelihood method. The integrated conditional likelihood is obtained by integrating out the parameter $P$ from the conditional distribution in \eqref{def-prob}, that is,
\begin{equation}
\mathbb Q(\a\mid \z) = \int_{[0,1]^{k(k+1)/2}}\P(\a\mid \z)\nu(P)d P\,,
\end{equation}
where $\nu(P)$ denotes a {\it prior} distribution over the matrix $P$. 
For convenience in the computations (see below), we assume that the prior distribution of $P$ is given by
\begin{equation}
\begin{split}
P_{ab} &\;\overset{i.i.d.}{\sim}\; \text{Beta}(1/2,1/2), \quad 1\leq a \leq b \leq k\,.
\end{split}
\end{equation}
Rewriting the likelihood \eqref{def-prob} by taking the product only on the communities $a$ and $b$ such that $a \leq b$ and using the conjugacy between the Bernoulli and Beta distributions, we obtain that 
\begin{equation}
\mathbb Q(\a\mid \z) = \prod_{1\leq a\leq b\leq k} \frac{ B( \tilde o_{ab}(\z,\a)+1/2, \tilde n_{ab}(\z)-\tilde o_{ab}(\z,\a)+1/2)}{B(1/2,1/2)}\,,
\end{equation}
where $\tilde o_{ab}(\z,\a)=o_{ab}(\z,\a)/2$, if $a= b$ and $\tilde o_{ab}(\z,\a)= o_{ab}(\z,\a)$ if $a\neq b$ and $\tilde n_{ab}(\z)= n_{ab}(\z)/2$, if $a=b$ and $\tilde n_{ab}(\z)=n_{ab}(\z)$ if $a\neq b$ and $B(x,y)$ denotes the beta-function.
Thus, the integrated conditional likelihood modularity can be defined as
\begin{equation}\label{def-P-hat2}
\Qb{\z} = \frac1{n^2}\sum_{1\leq a \leq b\leq k} \log \frac{B( \tilde o_{ab}(\z,\a)+1/2, \tilde n_{ab}(\z)-\tilde o_{ab}(\z,\a)+1/2)}{B(1/2,1/2)}\,.
\end{equation}
Given the observed graph $\a$, the estimator of the communities by the maximum integrated conditional likelihood (ICL) is given by 
\begin{equation}\label{zstar2}
\hz^{\textsc{icl}}\;=\; \argmax\limits_{\z \in \mathcal F(n,\alpha)} \Qb{\z} \,,
\end{equation}
where $\mathcal F(n,\alpha)$ is defined, for some $\alpha>0$, by \eqref{calF}. It is important to note that the ICL estimator proposed here differs from the Integrated Complete Likelihood, also called ICL, proposed by \citet{daudin2008mixture} for model selection: in our case, we integrate the conditional likelihood, whereas in their approach, the complete likelihood is integrated.

The first to propose a Bayesian community detection approach and to establish its  consistency (weak and strong)
was \citet{van2018bayesian}, under the condition $\rho_n \gg \log^2 n/n$. They defined prior distributions on $\pi$ and $P$ and derived the joint distribution of $\A$ and $\Z$ by integrating out the parameters.  The estimator of the communities is defined as the posterior mode of the distribution of $\Z$ given $\A$. In this work, we consider the integrated conditional likelihood approach, which is obtained by integrating out the parameter $P$ from the conditional distribution of $\A$ given $\Z$, and defining the estimator of the communities according to this distribution. This minor modification allows us to prove the weak consistency of this Bayesian estimator under the regime $n\rho_n \to\infty$ ad $n\to\infty$ and the strong consistency under the regime $\rho_n \geq \log n/n$. 

A relation between the ICL and the ML modularities, which allows us to apply the same proof techniques for community recovery to both estimators, is stated in the following lemma.

\begin{lemma}\label{lemma-qml-qb}
For all $n\geq 1$ we have that 
\[
\max_{\z\in[k]^n} \bigl| \;\Qb{\z} - \Q{\z} \;\bigr| \;\leq\; \frac{k^2(\log n+2)}{n^2}\,.
\]
\end{lemma}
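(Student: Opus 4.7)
The first step is to put both modularities on the same index set. Noting that for $a<b$ one has $\tilde n_{ab}=n_{ab}$ and $\tilde o_{ab}=o_{ab}$, while for $a=b$ one has $\tilde n_{aa}=n_{aa}/2$, $\tilde o_{aa}=o_{aa}/2$, and that $\tilde o_{aa}/\tilde n_{aa}=o_{aa}/n_{aa}$, the definition of $\Q{\e}$ collapses to
\[
\Q{\e}\;=\;\frac{1}{n^2}\sum_{1\le a\le b\le k}\tilde n_{ab}(\e)\,\tau\!\left(\frac{\tilde o_{ab}(\e)}{\tilde n_{ab}(\e)}\right),
\]
which is the same index set as $\Qb{\e}$. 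Since $\tilde n_{ab}(\e)\le n^2$ and there are $k(k+1)/2\le k^2$ pairs, it suffices to establish a per-pair bound of the form $|\Delta(m,o)|\le\tfrac12\log m+c_0$ with $c_0\le 2$, where
\[
\Delta(m,o)\;:=\;\log\frac{B(o+\tfrac12,\,m-o+\tfrac12)}{B(\tfrac12,\tfrac12)}\;-\;m\,\tau(o/m),\qquad 0\le o\le m.
\]

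For this per-pair estimate I would use the half-integer Gamma identity $\Gamma(n+\tfrac12)=(2n)!\,\sqrt{\pi}/(4^n\,n!)$ to derive the closed form
\[
\frac{B(o+\tfrac12,\,m-o+\tfrac12)}{B(\tfrac12,\tfrac12)}\;=\;\frac{\binom{2o}{o}\binom{2(m-o)}{m-o}}{4^m\,\binom{m}{o}},
\]
and then apply the standard two-sided Stirling estimates
\[
\log\binom{2n}{n}\;=\;n\log 4-\tfrac12\log(\pi n)+O(1/n)\quad (n\ge 1),
\]
together with $\log\binom{m}{o}=-m\tau(o/m)-\tfrac12\log\bigl(2\pi m\cdot\tfrac{o}{m}(1-\tfrac{o}{m})\bigr)+O(1/m)$ for $1\le o\le m-1$. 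Substituting, the leading $n\log 4$ contributions cancel against $m\log 4$, and the half-logs in $o$ and $m-o$ combine with those in $\tfrac{o}{m}(1-\tfrac{o}{m})$, yielding the clean expansion
\[
\Delta(m,o)\;=\;\tfrac12\log\!\bigl(2/(\pi m)\bigr)+O(1/m),
\]
so that $|\Delta(m,o)|\le\tfrac12\log m+c_0$ for a universal $c_0$. The boundary cases $o\in\{0,m\}$ reduce via $B(\tfrac12,m+\tfrac12)/B(\tfrac12,\tfrac12)=\binom{2m}{m}/4^m$ to the same Stirling bound, and $m=0$ gives $\Delta(0,0)=0$ by the $0^0=1$ convention.

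Combining the per-pair estimate with the observation $\tfrac12\log\tilde n_{ab}(\e)\le\log n$ and summing over $k(k+1)/2\le k^2$ pairs gives
\[
|\Qb{\e}-\Q{\e}|\;\le\;\frac{1}{n^2}\sum_{a\le b}|\Delta(\tilde n_{ab}(\e),\tilde o_{ab}(\e))|\;\le\;\frac{k^2(\log n+2)}{n^2},
\]
uniformly in $\e\in[k]^n$, which is the stated bound. The main obstacle is the bookkeeping: the Stirling error terms and the boundary contributions $o\in\{0,m\}$ must be controlled \emph{uniformly} in $m\ge1$ so that the universal constant $c_0$ fits within the quoted value $2$. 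In particular, the small values of $m$, where the Stirling asymptotics are weakest, should be checked by direct computation to ensure they do not spoil the bound.
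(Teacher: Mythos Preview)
Your plan is correct and would yield the stated bound, but it takes a somewhat different route from the paper's proof. The paper first observes that $\Q{\e}-\Qb{\e}\ge 0$ trivially, since the maximum likelihood over $P$ dominates the prior average of the likelihood; this disposes of one direction for free. For the other direction it works directly with Gamma functions: it cites an external inequality (\cite{cerqueira2024}, Lemma~5.1) to bound the ratio $(\tilde o/\tilde n)^{\tilde o}(1-\tilde o/\tilde n)^{\tilde n-\tilde o}/[\Gamma(\tilde o+\tfrac12)\Gamma(\tilde n-\tilde o+\tfrac12)]$ by $1/[\Gamma(\tilde n+\tfrac12)\Gamma(\tfrac12)]$, and then applies the standard two-sided Stirling bounds for $\Gamma$ to control $\log[\Gamma(\tfrac12)\Gamma(\tilde n+1)/\Gamma(\tilde n+\tfrac12)]$ by $\log n+2$.

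Your approach instead rewrites the Beta ratio via the half-integer Gamma identity as a product of central binomials divided by $4^m\binom{m}{o}$, and then uses Stirling for binomials to obtain the sharp two-sided expansion $\Delta(m,o)=\tfrac12\log(2/(\pi m))+O(1/m)$. This is more self-contained (no external lemma) and gives both directions at once, at the cost of the uniform-constant bookkeeping you flag: the Stirling errors for $\binom{2o}{o}$ and $\binom{2(m-o)}{m-o}$ are $O(1/o)$ and $O(1/(m-o))$, not $O(1/m)$, so the intermediate regime $o\in\{1,m-1\}$ and small $m$ do need explicit checking to pin down $c_0\le 2$. The paper sidesteps this because the cited lemma is a clean non-asymptotic inequality and the trivial lower bound handles the other side.
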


The proof of this result is given in \ref{sec:modularities_relation}.

\section{Weak and strong consistency}\label{sec:consistency}
 
To prove the consistency results, we begin by defining a measure of discrepancy between two community assignments. Denote the discrepancy between any estimated community vector $\hz$ and the true community labels $\Z$ by the function
\begin{equation}\label{def:loss}
L(\hz,\Z) \;=\;  \min_{\sigma}\,  \Bigl\{\, \sum_{i=1}^n \1\{\hat z_i \neq  \sigma(Z_i)\} \,\Bigr\}
\end{equation}
where $\sigma$ denotes any permutation of the labels, $\sigma\colon [k]\to[k]$.

We say that an estimator is \textit{weakly consistent} if the fraction of misclassified nodes, up to a permutation, converges to zero; that is,  if for all $\epsilon >0$ we have 
\[
\P(L(\hz,\Z) < \epsilon n) \to 1\quad \text{ as } n\to\infty\,.
\]
A stronger requirement defines a \textit{strongly consistent} estimator if
asymptotically no errors occur, that is if
\[
\P(L(\hz,\Z) =0 ) \to 1\quad \text{ as } n\to\infty\,.
\]

Weak and strong consistency are related to different notions of recovery: weak consistency corresponds to almost exact recovery, while strong consistency corresponds to exact recovery \citep{abbe_jmlr2018}.  

Our first result refers to the weak consistency of the ML and ICL estimators. The weak consistency of these estimators is achieved under a more general sparse setting than that required for strong consistency, requiring only that $n\rho_n \to \infty$. This condition has been proven to be a necessary condition for weak consistency in the case of the symmetric two-community SBM with known parameters by \citet{mossel2016}.  To our knowledge, the weak consistency of the ML and the Bayesian estimator was established before under the condition $\rho_n\gg \log^2 n / n$ in \citet{van2018bayesian}, so Theorem~\ref{teorema-weak} fills in the gap attaining the optimal rate.   We also obtain a condition on the minimum size of a candidate community as a function of $n$, which still guarantees the convergence of the estimators. This is an interesting property that could enable $k$ to grow with the sample size, at a rate $k \asymp \alpha^{-1}$.

\begin{theorem}\label{teorema-weak}
Let $(\Z,\A)$ be generated from a SBM with parameters $(k,\pi, \rho_nS)$. Then 
if $\alpha<\min_{a} \pi_a$ and $n\rho_n\to\infty$ as $n\to\infty$, we have for both $\hz=\hz^\textsc{ml}$ and $\hz=\hz^\textsc{icl}$, that  for all $\epsilon >0$, $L(\hz,\Z) < \epsilon n$ eventually almost surely as $n\to\infty$.  Moreover, the convergence still holds if $\alpha$ is taken as decreasing with $n$ at a rate $\alpha \gg  (n\rho_n)^{-1/2}$. 
\end{theorem}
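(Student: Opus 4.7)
By Lemma~\ref{lemma-qml-qb} we have $\sup_{\e}\bigl|\Qb{\e}-\Q{\e}\bigr| = O(\log n/n^2)$, which is $o(\rho_n)$ whenever $n\rho_n\to\infty$. Consequently, if one proves that with probability tending to one
\[
\Q{\z} \;<\; \Q{\Z} \;-\; c\,\rho_n
\]
uniformly over $\{\z\in\mathcal F(n,\alpha)\colon m(\z,\Z)\geq \epsilon n\}$ for some $c=c(\epsilon)>0$, the same inequality transfers to $\Qb{\cdot}$ up to a negligible $O(\log n/n^2)$ correction. Both maximizers then satisfy $m(\hz,\Z)<\epsilon n$ with high probability, and it is enough to analyze $\Q{\cdot}$.

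The strategy for $\Q{\cdot}$ is a population-profile argument in the spirit of Bickel--Chen. To each $\z\in\mathcal F(n,\alpha)$ associate the confusion matrix $R(\z,\Z)\in[0,1]^{k\times k}$ defined by $R_{ab}=n^{-1}\#\{i\colon Z_i=a,\, z_i=b\}$, and note that conditional on $\Z$ each $\o{a}{b}(\z,\A)$ is a sum of independent $\text{Bernoulli}(\rho_n S_{cd})$ variables. Using the expansion
\[
\tau(\rho_n y) \;=\; \rho_n\bigl(y\log y - y\bigr) \;+\; \rho_n y\log \rho_n \;+\; O(\rho_n^2),
\]
one obtains that $\E[\Q{\z}\mid\Z]$ converges, after subtracting a term proportional to $\log\rho_n$ that depends only on the total edge count in $\A$ and is therefore identical across $\z$, to a deterministic functional $\rho_n F(R;\pi,S)$ built from $\tau_*(y)=y\log y - y$. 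Strict convexity of $\tau_*$, combined with the identifiability hypothesis that no two columns of $S$ coincide, gives (via Jensen applied to mergers of true-community blocks) that $F$ is uniquely maximized over confusion matrices with row sums $\pi$ and column sums at least $\alpha$ at the permutations of $\diag(\pi)$, with a gap bounded below by a positive function of $\epsilon$, $\smi$, $\sma$, and $\min_a\pi_a - \alpha$.

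The main technical obstacle is matching this $\Theta(\rho_n)$ gap with a uniform fluctuation bound of order $o(\rho_n)$ across all $k^n$ candidate assignments. Conditional on $\Z$, each $\o{a}{b}(\z)$ is a sum of $\n{a}{b}(\z)\gtrsim \alpha^2 n^2$ independent Bernoullis of parameter of order $\rho_n$, so Bernstein's inequality yields fluctuations of order $\sqrt{\alpha^2 n^2 \rho_n\, t}$ at confidence level $e^{-t}$. A union bound over $\mathcal F(n,\alpha)\subset[k]^n$ forces $t\asymp n$; the resulting uniform error in $\tau\bigl(\o{a}{b}/\n{a}{b}\bigr)$ is of order $(\alpha\sqrt{n\rho_n})^{-1}$, which weighted by $\n{a}{b}/(2n^2)$ gives a total contribution to $\Q{\z}$ of order $\rho_n/(\alpha\sqrt{n\rho_n})$. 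This is $o(\rho_n)$ exactly when $\alpha\sqrt{n\rho_n}\to\infty$, which explains both the sharpness of the regime $n\rho_n\to\infty$ for fixed $\alpha$ and the optimal rate $\alpha\gg(n\rho_n)^{-1/2}$ in the second claim. The remaining ingredients are routine: the event $\Z\in\mathcal F(n,\alpha)$ holds with high probability by the law of large numbers on $n_a(\Z)/n\to\pi_a$ together with $\alpha<\min_a\pi_a$, and the permutation invariance of $m(\cdot,\Z)$ is absorbed by enlarging the union bound by the finite factor $k!$.
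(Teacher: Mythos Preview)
Your proposal is correct and follows essentially the same route as the paper: decompose $Q_{\textsc{ml}}(\e)=H_{\rho_nS,n}(R(\e,\Z))+X(\e,\Z)$, use the Taylor expansion of $\tau$ together with a Bennett/Bernstein bound and a union over $\mathcal F(n,\alpha)$ to show the fluctuation $X$ is $o(\rho_n)$ uniformly (the paper's Theorem~\ref{theoremX}, driven by Theorem~\ref{teo_basico1} and Corollary~\ref{main-cor}), invoke identifiability plus Jensen for the $c\rho_n$ population gap (Lemma~\ref{lemma-basico-vandpass0}), and transfer to $\hz^{\textsc{icl}}$ via Lemma~\ref{lemma-qml-qb}. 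One minor bookkeeping slip: after the $\rho_n y\log\rho_n$ term is removed, the error in the effective $\tau$ is already $O(\rho_n/(\alpha\sqrt{n\rho_n}))$, not $O((\alpha\sqrt{n\rho_n})^{-1})$---the weighting by $n_{ab}/(2n^2)\le 1$ does not supply the missing $\rho_n$---but your final scaling and the condition $\alpha\sqrt{n\rho_n}\to\infty$ are correct.
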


 We next present the strong consistency result for the estimators $\hz^\textsc{ml}$ and  $\hz^\textsc{icl}$  for $\rho_n  \geq \log n/n$. When $\rho_n \gg \log n/n$, any pair $(\pi, S)$ satisfies the conditions for strong consistency. However, in the critical case $\rho_n = \log n/n$, there is a precise phase transition threshold, depending on $(\pi, S)$, such that strong consistency of $\hz^\textsc{ml}$ holds whenever this threshold is exceeded. 
\citet{abbe-colin2015} studies the exact recovery
threshold in a general SBM with $k$ communities and known parameters. In this work, the authors defined the community profile associated with community $b$ by the vector $\theta_b$ with $\theta_{ba} = \pi_aS_{ab}$, for all $a,b\in [k]$.
Based on this definition, the authors prove that the accuracy of identifying the underlying communities increases as the separation between community profiles grows. To quantity the degree of separation of the community profiles they proposed the following function
\begin{equation}
D_{+}(\theta_{b'},\theta_{b}) = \max_{t\in[0,1]}\sum_{a} \pi_a( tS_{ab'}+(1-t)S_{ab}-S_{ab'}^{t}S_{ab}^{1-t}).
\end{equation}
For a fixed value of \(t\), this function defines an \(f\)-divergence with \(f(x) = 1 - t + tx - x^t\). Since it generalizes both the Hellinger divergence (recovered when \(t = 1/2\)) and the Chernoff divergence, the authors refer to it as the \textit{Chernoff-Hellinger (CH) divergence} (see Section 2.3 in \citet{abbe-colin2015} for further discussion). The results for the strong consistency of the estimators $\hz^\textsc{ml}$ and 
$\hz^\textsc{icl}$, given in Theorems \ref{teorema-chave} and \ref{teorema-chave-icl}, respectively, are associated with the CH divergence through the constant.
\begin{equation}\label{constantC}
C(\pi, S) \;=\;\min_{b\neq b'} D_{+}(\theta_{b'},\theta_{b}).
\end{equation}

\citet{abbe-colin2015} proved that exact recovery is information-theoretically solvable if and only if $C(\pi, S)>1$ when $\rho_n=\log n/n$. At the threshold, $C(\pi, S)=1$, exact recovery is solvable if and only if $S$ has no two equal columns and all its entries are non-zero
 
For the maximum likelihood estimator $\hz^\textsc{ml}$, when $\rho_n = \log n / n$, we have shown that strong consistency is achieved under the $C(\pi,S) > 1$. This generalizes the results of \citet{abbe-colin2015} and \citet{mossel2016} to the setting of classical maximum likelihood estimators in statistics.

\begin{theorem}\label{teorema-chave}
Let $(\Z,\A)$ be generated from a SBM with parameters $(\pi, \rho_nS)$, with $S$ a bounded symmetric matrix  with no two equal columns. If  $\log n/n \ll \rho_n\leq 1$ we have that  
\begin{equation*}
\P\Bigl(\, L(\hz^\textsc{ml},\Z) \, = \,0\, \Bigr) \;\to\; 1
\end{equation*}
as $n\to\infty$. 
The result also holds at the regime $\rho_n=\log n/n$ and $C(\pi,S)> 1$. 
\end{theorem}

\begin{remark}\label{mainremark}
\it{In the two symmetric communities model with  $\rho_n=\log n / n$ and 
\[
S = \begin{pmatrix} s_1 & s_2\\
s_2 & s_2
\end{pmatrix}
\]
the CH divergence is maximized at $t=1/2$ and we obtain
\begin{align*}
C(\pi,S)&
=  \frac12(\sqrt{s_1} - \sqrt{s_2})^2.
\end{align*}
Thus, the condition $C(\pi,S)\geq 1$ is the same necessary and sufficient condition obtained by \citet{abbe2015exact} and  \citet{mossel2016} in the planted bisection model.}
\end{remark}

\begin{remark}
{\it In the symmetric SBM with balanced $k$ communities, that is when $\pi_i=1/k$, for $i=1,\dots,k$ and $S_{ij} = s_1$ for $i=j$ and $S_{ij} = s_2$ for  $i\neq j$ we have that
\begin{align*}
C(\pi,S)& =  \frac{1}k (\sqrt{s_1} - \sqrt{s_2})^2\,.
\end{align*}

In the general model with $k$ communities, the quantity $C(\pi,S)$ coincides with that obtained by 
\citet{abbe-colin2015}. They proved that exact recovery is information-theoretically solvable if, and only if, $C(\pi,S)\geq 1$. In this work, the strong consistency of the maximum likelihood estimator is not guaranteed at the phase transition threshold $C(\pi, S) = 1$, and whether consistency holds exactly at the threshold remains an open question for this estimator. 
}
\end{remark}

The next result provides a necessary condition that guarantees the strong consistency of the ICL estimator when $\rho_n \gg \log n/n$. This extends to the optimal sparse regime the consistency result for the Bayesian estimator established by \citet{van2018bayesian}, which was proved under the stronger assumption $\rho_n \gg \log^2 n/n$.

\begin{theorem}\label{teorema-chave-icl}
Let $(\Z,\A)$ be generated from a SBM with parameters $(\pi, \rho_nS)$ and $S$ is symmetric with no two columns equal. If  $\log n/n \ll \rho_n\leq 1$ we have that  
\begin{equation*}
\P\Bigl(\, L(\hz^\textsc{icl},\Z) \, = \,0\, \Bigr) \;\to\; 1
\end{equation*}
as $n\to\infty$. The result also holds at the regime $\rho_n=\log n/n$ and $C(\pi,S)\geq 1+k^2$.
\end{theorem}

 It is worth noting that, for the ICL estimator, we obtain a slightly stronger condition that depends on $k$, namely $C(\pi,S) \geq 1 + k^2$, in the regime $\rho_n= \log n/n$. Whether this condition can be relaxed for $\hz^\textsc{icl}$ remains an open question.

\section{Empirical analysis}\label{sec:empirical}

The ML and ICL estimators given in \eqref{zstar} and \eqref{zstar2}, respectively,  involve a maximization over all possible community assignments, making it computationally  demanding. In practice, it is possible to approximate the solution of the ML estimator by a solution of the variational EM algorithm, as proposed in \citet{daudin2008mixture}. In the case of the ICL estimator, \citet{latouche2015} proposed a greedy inference algorithm based on the integrated likelihood. While this algorithm differs slightly from the proposed ICL estimator in  \eqref{zstar2} by jointly estimating the number of communities and clusters, we utilize it as an illustration of the practical application of the ICL estimator. In order to measure the discrepancy between the estimated communities and the true ones, we opted to employ Normalized Mutual Information (NMI) because it is well-defined even for clusters with different numbers of communities, as can be the case in the computation of the ICL estimator \citep{Yao2003}. The NMI values range from 0 to 1, where 0 indicates no mutual information, and 1 indicates perfect agreement between the two communities' assignments. 

We first study the impact of the difference between the probability of connection within ($\rho_n s_1$) and between ($\rho_ns_2$) communities in terms of $(\sqrt{s_1}-\sqrt{s_2})^2$ for networks generated by the SBM model with balanced communities and $\rho_n=\log n / n $.   Figure \ref{fig:NMI_200} shows that the NMI increases to one as the quantity $(\sqrt{s_1}-\sqrt{s_2})^2$ approaches  $k$, visualized as the dashed vertical line, as stated in Theorem \ref{teorema-chave}. 
\begin{figure}[htb]
     \centering
     \begin{subfigure}[b]{0.45\textwidth}
         \centering
         \includegraphics[width=\textwidth]{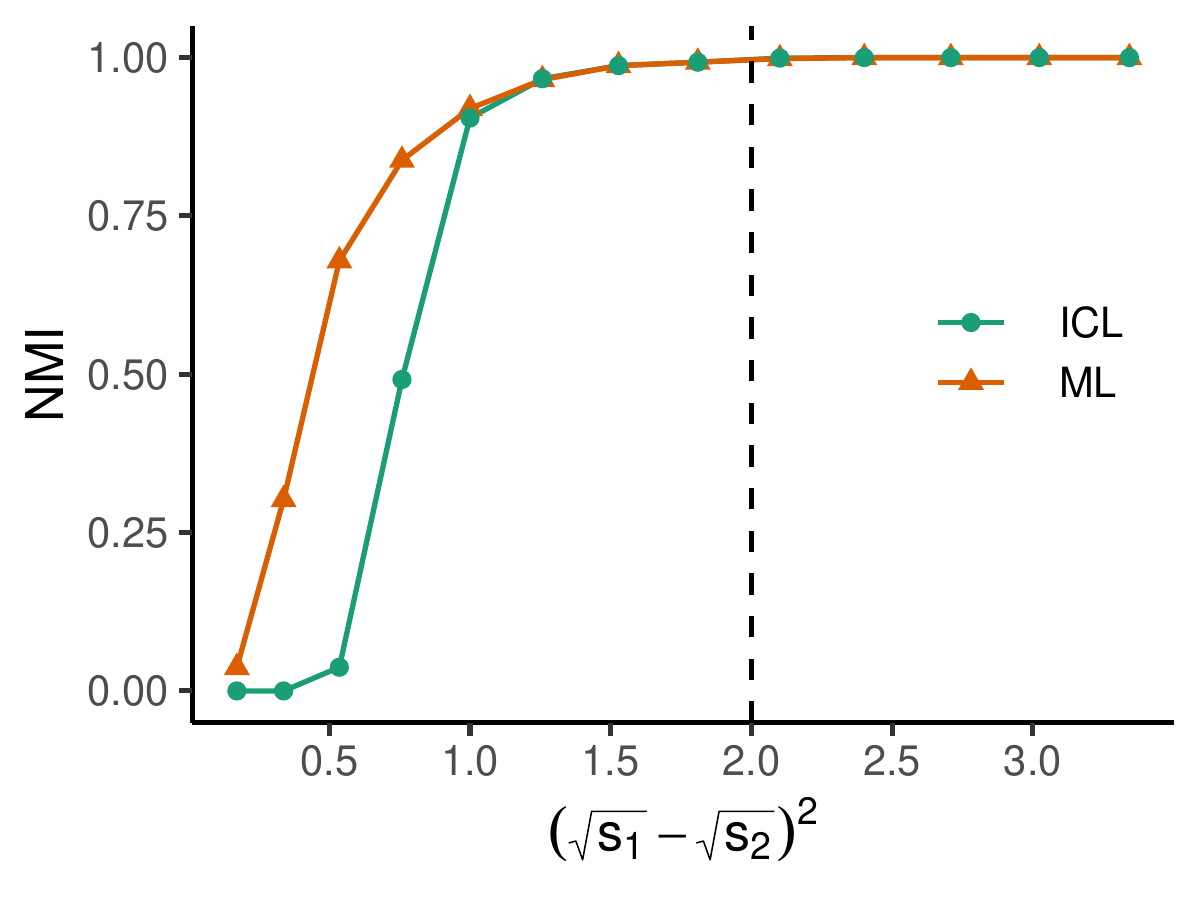}
         \caption{$k=2$.}
     \end{subfigure}
     \hfill
     \begin{subfigure}[b]{0.45\textwidth}
         \centering
         \includegraphics[width=\textwidth]{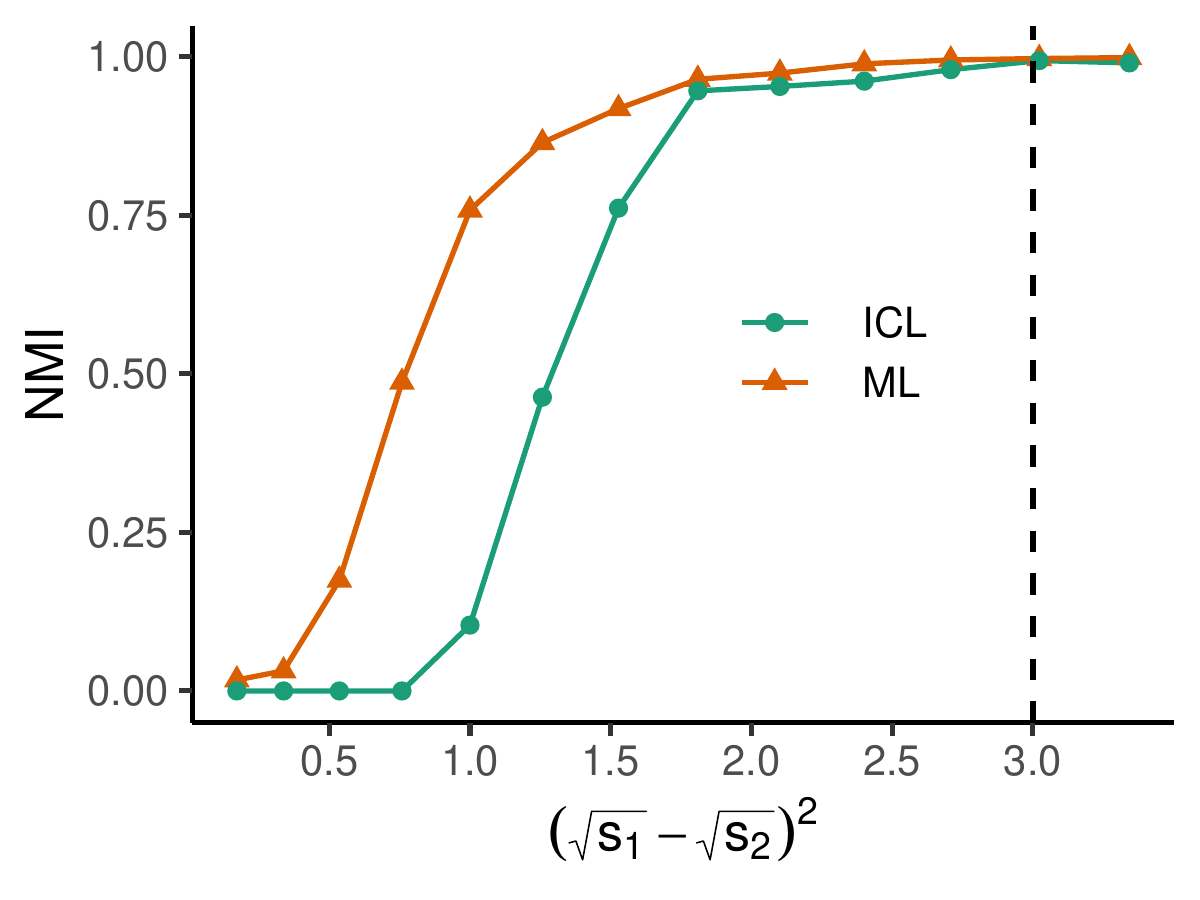}
         \caption{$k=3$.}
     \end{subfigure}
     \caption{Mean of NMI between estimated and true communities membership over 50 simulated balanced networks with $n=200$, $\rho_n=\log n/ n $ for the approximate solutions of ML and ICL estimators. The dashed vertical line shows the value of the constant  at which the phase transition occurs.}
\label{fig:NMI_200}
\end{figure}

To investigate the impact of the sparsity parameter $\rho_n$ in the NMI between the estimated and true communities assignments, we set $s_1$ and $s_2$ such that $(\sqrt{s_1}-\sqrt{s_2})^2 = 2.10 $ and we vary the values of $\rho_n$ for fixed $n=200$. Figure \ref{fig:NMI_rho} shows that setting $\rho_n=1/n=0.005$ the NMI is close to zero and it approaches to one when $\rho_n$ is getting closer to $\log n / n$.

\begin{figure}[htb]
\centering
     \begin{subfigure}[b]{0.45\textwidth}
         \centering
\includegraphics[width=\textwidth]{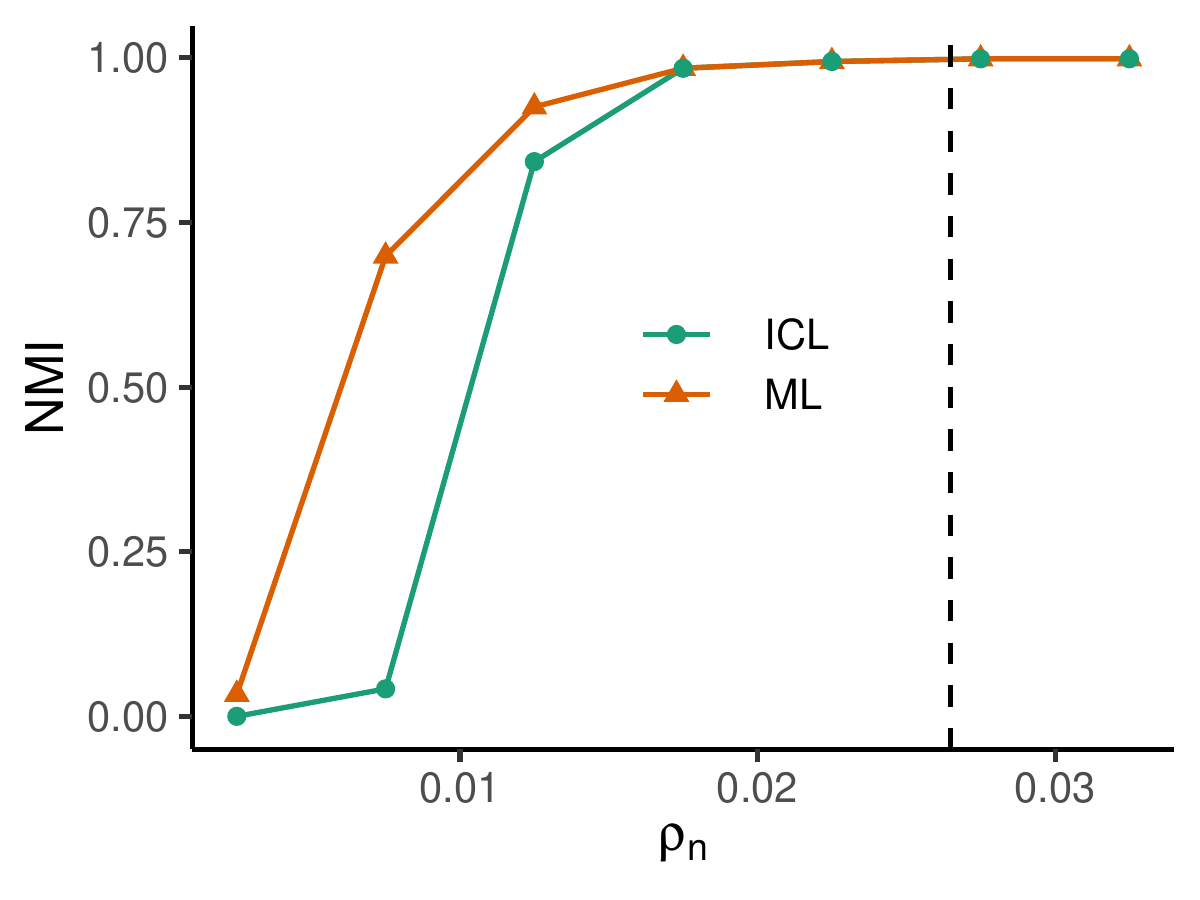}
         \caption{$k=2$.}
     \end{subfigure}
     \hfill
     \begin{subfigure}[b]{0.45\textwidth}
         \centering
         \includegraphics[width=\textwidth]{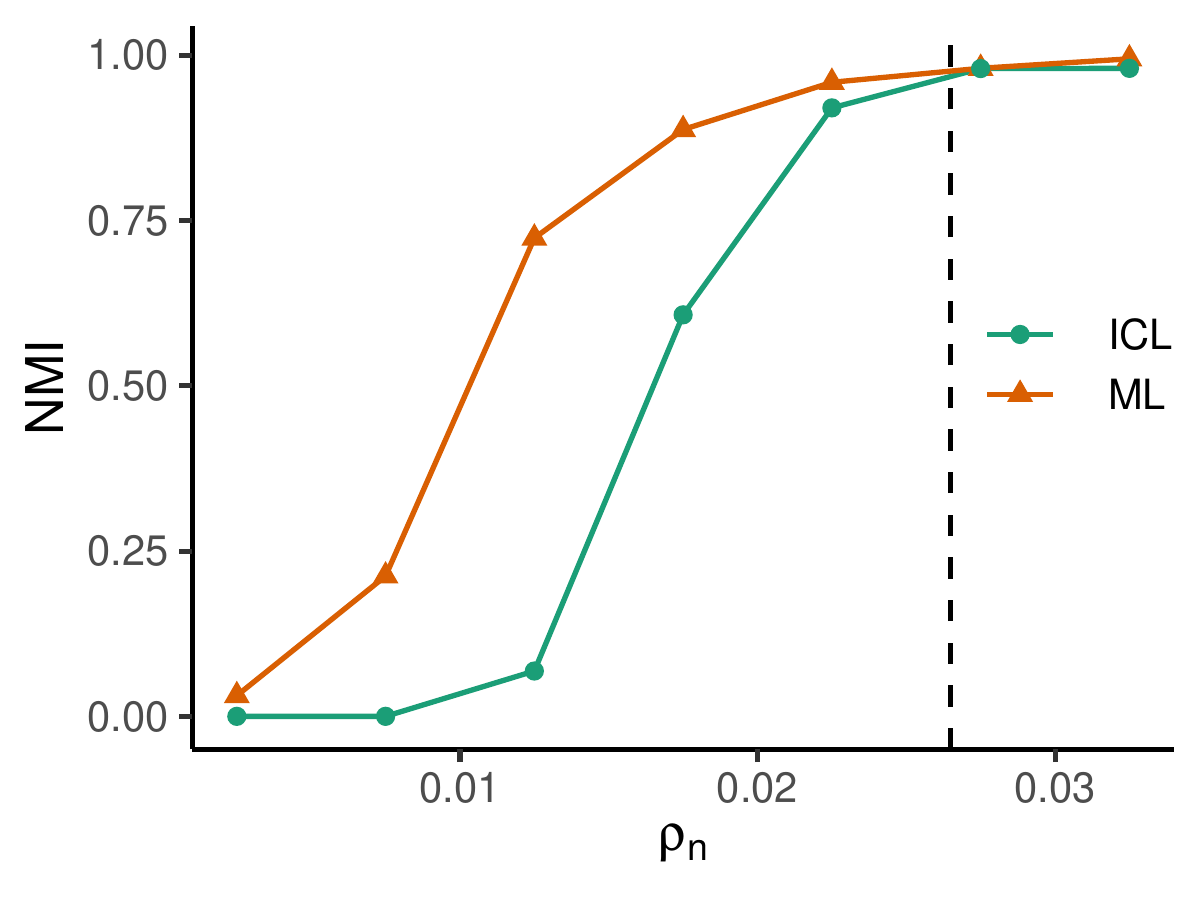}
         \caption{$k=3$.}
     \end{subfigure}
\caption{Mean of NMI between estimated and true communities membership over 50 simulated balanced networks with $n=200$, $k=2$, $(\sqrt{s_1}-\sqrt{s_2})^2>2$  for the approximation solution of the ML and ICL estimators. The dashed line is set at $\rho_n=\log n / n$.}
\label{fig:NMI_rho}
\end{figure}

\section{Proofs of main results}\label{proofs}

Let $\|M\|_1$ denote the entry-wise 1-norm of a matrix $M$, defined as the sum of the absolute values of all entries of $M$.

Define the discrepancy between two  community assignments $\e,\z \in [k]^n$ by
\[
m(\e,\z) \;=\;    \sum_{i=1}^n \1\{\hat e_i \neq  z_i\}. 
\] 
Observe that $m(\e,\z)$ coincides with $L(\e,\z)$ when the permutation of the labels $\sigma: [k]\to [k]$ is taken to be the identity.

Denote the confusion matrix between two  community assignments $\e,\z \in [k]^n$ by $R(\e,\z) \in \mathbb R_{\geq0}^{k\times k}$ with entries
\begin{equation}\label{conf_mat}
R_{ab}(\e,\z) \;=\;  {\frac1n}\sum_{i=1}^n \mathds{1}\{e_i=a, z_i=b\},\quad a,b\in [k]\,.
\end{equation}
Observe that the row sum and the column sum of the confusion matrix satisfy that $\na{a}(\e) = n [R(\e,\Z)\1]_a$ and $\na{a}(\z) = n [R(\e,\z)^T\1]_a$, respectively, where
 $\1$ is a $k\times 1$ vector with entries equal to one. Given a vector $v \in \mathbb R^k$ we denote by $\diag(v)$ the $k\times k$ diagonal matrix with diagonal 
elements given by $v$. An important observation is that $\diag(R(\e,\z)^T\1) = R(\z,\z)$, for all $\e,\z\in [k]^n$.

Given  any matrix $R\in \mathbb R_{\geq0}^{k\times k}$ and  $a,b\in[k]$ we define  the value 
\begin{equation}\label{pmixt}
 [P_R]_{ab} = \frac{[R P R^T]_{ab}-\delta_{ab}  \sum_{i=1}^{k}P_{ii} R_{ai}/n}{ [R\1]_{a}( [R\1]_{b}  - \delta_{ab}/n)}\,,
\end{equation}
where $\delta_{ab}=\1\{a=b\}$.

Observe that for  the model \eqref{def-prob}, for any  $\e,\z\in [k]^n$ we have that 
\begin{equation}\label{cond_exp_oab}
\E(\o{a}{b}(\e,\A)\,|\,\Z=\z) \;=\; \n{a}{b}(\e) [P_{R(\e,\z)}]_{ab}\,.
\end{equation}

Now define for any  confusion matrix  $R \in \mathbb R_{\geq0}^{k\times k}$  the functions
\begin{equation*}
\begin{split}
H_{P}(R) &\;=\; \frac12 \sum_{1\leq a,b\leq k} [R\1]_a[R\1]_b\, \tau\left( \frac{[R P R^T]_{ab}}{ [R\1]_{a}[R\1]_{b}} \right)
\end{split}
\end{equation*}
and
\begin{equation*}
\begin{split}
H_{P,n}(R) &\;=\; \frac12 \sum_{1\leq a,b\leq k} [R\1]_a([R\1]_b-\delta_{ab}/n)\, \tau\bigl( [ P_{R}]_{ab} \bigr)
\end{split}
\end{equation*}
where $P_R$ is defined in \eqref{pmixt} and $\tau(x)=x\log(x) + (1-x)\log(1-x)$. 
Observe that the function $H_{P,n}(R)$ is the likelihood modularity \eqref{def-P-hat} with the argument in the function $\tau$ substituted by its conditional expectation, given by \eqref{cond_exp_oab}. 

We also define, for two vectors $\e,\z \in [k]^n$,  the function $X\colon [k]^n\times [k]^n \to \mathbb R$  by 
\begin{equation}\label{Xmatrix}
X(\e,\z)\;=\; \frac1{2n^2}\sum_{1\leq a,b\leq k}\,\n{a}{b}(\e) \Bigl[ \tau\Bigl(\dfrac{\o{a}{b}(\e,\a)}{\n{a}{b}(\e)}\Bigr) - \tau\bigl( [ P_{R(\e,\z)}]_{ab} \bigr)\Bigr]\,.
\end{equation}
Note that, with this definition, the quantity $X(\e,\z)$ measures the difference in the likelihood when the arguments are given by the observed counts versus their expected values. Formally, we also have 
\begin{equation}\label{Xmatrix2}
X(\e,\z)\;=\;  \Q{\e} - H_{P,n}(R(\e,\z))\,.
\end{equation}

The proofs of weak and strong consistency for the ML and ICL estimators rely on concentration results for $X(\e,\z)$. 

\subsection{Proof of weak consistency}

We now turn to the proof of weak consistency for the ML and ICL estimators, as stated in Theorem~\ref{teorema-weak}.

\begin{proof}[Proof of Theorem~\ref{teorema-weak}]
It follows from Lemma~\ref{lemma-basico-vandpass} that
\[
m(\hz^\textsc{ml},\Z) \;=\; \frac{n}{2}\| R(\hz^\textsc{ml},\Z)-R(\Z,\Z)\|_1.
\]
Then, to prove that for all $\epsilon>0$, $L(\hz^\textsc{ml},\Z) < \epsilon n $ eventually almost surely as $n\to\infty$, it is enough to prove that
\begin{equation}\label{eq:dif_R}
\min_{\sigma}  \|D_\sigma R(\hz^\textsc{ml},\Z)-R(\Z,\Z)\|_1<\epsilon
\end{equation}
eventually almost surely as $n\to\infty$, where $D_\sigma$ denotes a permutation matrix associated with the permutation $\sigma:[k]\to [k]$ such that $D_\sigma R(\e,\Z)=R(\sigma(\e),\Z)$. \\
Let $\mathcal R_\epsilon$ be the set of probability matrices $R$ with
\[
\min_{\sigma} \|D_\sigma R - \diag(R^\intercal\1)\|_1\geq \epsilon\quad\text{ and }\quad \min_{a}( [R^\intercal\1]_a) \geq \epsilon\,.
\]
The set $\mathcal{R}_\epsilon$ consists of confusion matrices $R$ that cannot be transformed into a diagonal matrix by permuting its rows and contain a nonzero element in every column. \\
Observe that if $\epsilon$ is sufficiently small, then by the Law of Large Numbers we have that $ [R(\hz^\textsc{ml},\Z)^\intercal\1]_a \geq \epsilon$ simultaneously for all $a\in[k]$, eventually almost surely as $n\to\infty$. Then, $R(\hz^\textsc{ml},\Z)\not\in \mathcal R_\epsilon$ eventually almost surelly as $n\to\infty$ is equivalent to $L(\hz^\textsc{ml},\Z)\leq \epsilon n$ eventually almost surelly as $n\to\infty$, that is what we want to prove.

 We use an argument by contradiction, assuming that, for fixed and sufficiently small  $\epsilon>0$, \eqref{eq:dif_R} does not hold eventually almost surelly as $n\to\infty$, that is, the event  $\{ \|R(\hz^\textsc{ml},\Z)-R(\Z,\Z)\|_1\geq\epsilon\}$ has positive probability for infinitly many $n$.
 
Let
\[
\eta_n = \inf_{R\in \mathcal R_\epsilon} \{ \,H_{\rho_nS}(\diag(R^T\1))-H_{\rho_nS}(R)\,\}\,.
\]
Since  $R\in \mathcal R_\epsilon$ cannot be transformed into a diagonal element by permuting rows and $\mathcal R_\epsilon$ is a compact set, we have, by  Lemma~\ref{lemma-basico-vandpass0}, that $\eta_n\geq c\rho_n$ for all  $n$ sufficiently large and for a given constant $c>0$. Then 
\begin{equation}
\{ H_{\rho_nS}(\diag(R(\hz^\textsc{ml},\Z)^T\1))-H_{\rho_nS}(R(\hz^\textsc{ml},\Z)) \;\geq\; c\rho_n \}
\end{equation}
holds with positive probability for infinitely many $n$.  Observe that taking $\alpha \gg (n\rho_n)^{-1/2}$, we have that $[R(\hz^\textsc{ml},\Z)\1]_a \geq (n\rho_n)^{-1} $ for $n$ sufficiently large. Then, by Lemma~\ref{lem:hps} we also have that the event 
\begin{equation}\label{qml1}
\{ H_{\rho_nS,n}(\diag(R(\hz^\textsc{ml},\Z)^T\1))-H_{\rho_nS,n}(R(\hz^\textsc{ml},\Z)) \;\geq\; \frac{c}2\rho_n\}
\end{equation}
holds with positive probability for infinitely many $n$.
By the definition of $\hz^\textsc{ml}$, we have that 
\begin{equation}\label{qml2}
Q_{\textsc{ml}}(\hz^\textsc{ml})  - Q_{\textsc{ml}}(\Z)   \;\geq\; 0
\end{equation}
eventually almost surely as $n\to\infty$, provided $\alpha< \min_{a}\pi_a$. 
Therefore 
\[
 \{ X(\hz^\textsc{ml},\Z) -  X(\Z,\Z)  \;\geq \;  \frac{c}2\rho_n\}
\]
holds with positive probability for infinitely many $n$.
Taking $\alpha \gg (n\rho_n)^{-1/2}$, Theorem~\ref{theoremX} implies that for all $\delta>0$
\[
 |X(\hz^\textsc{ml},\Z) - X(\Z,\Z)|\;<\;  \delta\rho_n
\]
eventually almost surely as $n\to\infty$, what is a contradiction for $\delta<c/2$.
Then $R(\hz^\textsc{ml},\Z)\not\in \mathcal R_\epsilon$ eventually almost surelly as $n\to\infty$.
This concludes the proof of Theorem~\ref{teorema-weak} for the case of the maximum conditional likelihood estimator $\hz^\textsc{ml}$.\\
In the case of $\hz^\textsc{icl}$, by definition we obtain that
\[
Q_{\textsc{icl}}(\hz^\textsc{icl})  - Q_{\textsc{icl}}(\Z)   \;\geq\;  0 \,,
\] 
provided $\alpha< \min_{a}\pi_a$. 
Then, if \eqref{qml1} holds, by Lemma~\ref{lemma-qml-qb}  we have that 
\begin{equation*}
\begin{split}
 X(\hz^\textsc{icl},\Z) - X(\Z,\Z) 
&\;= \; H_{\rho_nS,n}(\diag(R^T\1))-  Q_{\textsc{icl}}(\Z)  + Q_{\textsc{icl}}(\Z) \\
&\quad\; - Q_{\textsc{ml}}(\Z)  -H_{\rho_nS,n}(R) + Q_{\textsc{icl}}(\hz^\textsc{icl}) -   Q_{\textsc{icl}}(\hz^\textsc{icl}) \\
&\quad\;+ Q_{\textsc{ml}}(\hz^\textsc{icl}) \\
&   \;\geq \; \frac{c}2\rho_n  - \frac{k^2 (\log n+2)}{n^2}\\
& \;\geq \; \frac{c}4\rho_n
\end{split}
\end{equation*}
with positive probability for infinitly many $n$. But we also have by  Theorem~\ref{theoremX} that 
\[
 |X(\hz^\textsc{icl},\Z) - X(\Z,\Z)|\;<\;  \delta\rho_n
\]
eventually almost surely as $n\to\infty$, what is a contradiction for $\delta<c/4$.  
\end{proof}

\subsection{Proof of strong consistency}

Now that we have established the weak consistency of the ML and ICL estimators, it follows that $L(\hz, \Z) < \epsilon n$  with probability converging to 1 as $n \to \infty$. To prove strong consistency, we need to show that the probability that the error of these estimators lies in the interval $[1, n]$ goes to zero as $n \to \infty$.

\begin{proof}[Proof of Theorem~\ref{teorema-chave}]
Let $\hz=\hz^\textsc{ml}$ and denote by $m=m(\hz,\Z)$ and $R = R(\hz,\Z)$. 
By Theorem~\ref{teorema-weak}, $\hz$ is weakly consistent, and therefore the confusion matrix $R$ is close to $\operatorname{diag}(R^\top \mathbf{1})$ up to an appropriate permutation of the labels. Without loss of generality, we assume that this permutation is the identity. Thus, by Lemma~\ref{lemma-basico-vandpass} and  Theorem~\ref{teorema-weak}, for $\delta>0$ 
\[
m\;=\;  \frac{n}{2}\,\|\diag(R^\top\1) - R\|_1  \;\leq \; \delta n
\]
holds with increasing probability. 
By the Strong Law of Large Numbers we also have that 
\[
\|\diag(R^\top\1) - \diag(\pi)\|_1 \;\leq \; \delta 
\]
eventually almost surely as $n\to\infty$, therefore
\[
\|R  - \diag(\pi)\|_1 \;\leq \; 2\delta
\]
eventually almost surely as $n\to\infty$.
If $\delta$ is taken sufficiently small, by Lemma~\ref{lemma-rate-nosso}  we have that 
\begin{equation}\label{mainineq}
\begin{split}
H_{\rho_n S,n}(\diag(R^\top\1)) - H_{\rho_nS,n}(R) \;&\geq \;  \rho_n \widetilde C(R,S)
\end{split}
\end{equation}
with 
\[
\widetilde C(R, S) =\sum_{b\neq b'} \Bigl( \sum_{a} [R^T\1]_{a} K_1(S_{ab}\|S_{ab'}) - \epsilon \Bigr)R_{b'b} \,,
\]
for $\epsilon >0$ arbitrarily small and $K_1(p\|q) = p \log (\frac{p}{q}) +  q - p$. 
Moreover,  we have that
\begin{equation}\label{eq378}
Q_{\textsc{ml}}(\hz)  - Q_{\textsc{ml}}(\Z)   \;\geq\; 0
\end{equation}
eventually almost surely as $n\to\infty$, provided $\alpha< \min_{a}\pi_a$. Then, by summing 
\eqref{mainineq} and \eqref{eq378} and  by \eqref{Xmatrix2}
 we must have that 
\begin{equation}\label{eq238}
 X(\hz,\Z)  - X(\Z,\Z) \;\geq \; \rho_n  \widetilde C(R,S) 
 \end{equation}
 eventually almost surely as $n\to\infty$.
We now define, for an arbitrary small $\eta>0$,  the event 
\[
\mathcal B = \Bigl\{ \sup_{a\in[k]}\Bigl| \frac{n_a(\Z)}n - \pi_a \Bigr| < \eta \Bigr\} . 
\]
We know, by the Strong Law of Large Numbers, that $\mathcal B$ holds, for any $\eta>0$,  eventually almost surely as $n\to\infty$. Thus, in the remaining of the proof, we will assume that $\Z \in \mathcal{B}$. On this set, we will prove that  
\begin{equation*}
\P\Bigl(\,  \bigl\lbrace  X(\hz,\Z) - X(\Z,\Z) \geq  \rho_n\widetilde C(R,S)\bigr\rbrace  \cap \bigl\lbrace m(\hz,\Z)\in (0,n]\bigr\rbrace \Bigr) \;\longrightarrow\; 0
\end{equation*}
as $n\to\infty$, implying that $m(\hz,\Z)=0$ with probability converging to 1 as $n\to\infty$.
Let $\Z \in \mathcal B$ and let $\e\in \F$ with $m(\Z,\e)\leq \delta n$. Conditioning on the event $\Z = \z$ and applying a union bound over the possible values of $m$ and $\e$, we obtain 
\begin{equation}\label{main-display0}
\begin{split}
&\P\Bigl(\bigl\lbrace  X(\hz,\Z) - X(\Z,\Z) \geq  \rho_n\widetilde C(R,S)\bigr\rbrace  \cap \bigl\lbrace m(\hz,\Z)\in (0,\delta n)\bigr\rbrace \Bigr) \\
&\leq \sum_{\z}\P\Bigl(\sup_{m\in [1,\delta n)} \sup_{\e\colon m(\e,\z)=m}X(\e,\z) - X(\z,\z) >  \rho_n \widetilde C(R,S) \,|\,\Z=\z\Bigr)\\ &\hspace{1.2cm}\times\P\bigl(\Z=\z\bigr)\\
&\leq \sum_{\z}\sum_{m}\sum_{\e} \P\Bigl( \{X(\e,\z) - X(\z,\z) >  \rho_n \widetilde C(R,S)\}\cap \mathcal C \,|\,\Z=\z\Bigr)\\
&\hspace{2.5cm}\times  \P\bigl(\Z=\z\bigr) + \P(\mathcal C^c)\,,
\end{split}
\end{equation}
where $\mathcal C$ is the event defined in \eqref{eq:event_C}. 
Now, if $\delta$ is sufficiently small, for any $\e$ with $m(\z,\e)\leq \delta n$,  by Theorem~\ref{theoremX2} we have that
\begin{equation}\label{eq:prob_diff_X}
\begin{split}
\P\bigl( \bigl\{X(\e,\Z)& - X(\Z,\Z)  >  \rho_n \widetilde C(R,S)\bigr\}\cap \mathcal C \,|\,\Z=\z\bigr)\\
&\;\leq\; \exp\Bigl[ - \sup_{t\in(0,1]} \Bigl\{  n^2\rho_n \bigl(  t \widetilde C(R, S) -  C_t(R,S)\bigr)\Bigr\} + \epsilon'\rho_n mn \Bigr]
\end{split}
\end{equation}
with $C_t(R,S)$  given by \eqref{tildeC}, and $\epsilon'$ a sufficiently small positive constant. 
Observe that for $t\in[0,1]$ we can write
\begin{align*}
t\widetilde C(R,S)-C_t(R, S) &=  \sum_{b\neq b'} \sum_{a} [R^T\1]_{a} \Bigl(tK_1(S_{ab}\|S_{ab'})- K_t(S_{ab}\|S_{ab'}) -  t\epsilon \Bigr)R_{b'b}\\
&\geq\;  \sum_{b\neq b'}\sum_{a}[R^T\1]_{a}  H_t(S_{ab} \| S_{ab'})R_{b'b}   - \frac{\epsilon m}{n} \\
&\geq\;  \Bigl( \min_{b\neq b'} \sum_{a}[R^T\1]_{a}  H_t(S_{ab} \| S_{ab'}) - \epsilon\Bigr)  \frac{m}{n}
\end{align*}
with
\[
H_t(s\| r) \;=\;  (1-t) s + t r - s^{1-t}r^t\,. 
\]
As $\Z\in \mathcal B$ and $H_t$ is bounded above,  for a sufficiently small $\eta$, we obtain that 
\begin{align*}
\sup_{t\in (0,1]}t\widetilde C(R,S)-C_t(R, S) &\geq\;  \bigl( C(\pi,S) -\epsilon\bigr) \frac{m}n 
\end{align*}
with
\[
C(\pi, S) \;=\; \min_{b\neq b'} \max_{t\in [0,1]} \;\sum_{a} \pi_a H_t(S_{ab} \| S_{ab'})
\]
and for a slightly different arbitrarily small $\epsilon>0$.
 Observe that $C(\pi,S) >0$ when $S$ is identifiable. Therefore
\begin{align*}
\P( \bigl\{X(\e,\Z) - X(\Z,\Z)  >   \rho_n \widetilde C(R,S)\bigr\}&\cap \mathcal C \,|\,\Z=\z)\\
&\;\leq\; \exp\Bigl[ - \bigl( C(\pi,S) -\epsilon\bigr) \rho_n n m  \Bigr],
\end{align*}
 with $\epsilon$ arbitrarily small. We use this bound on \eqref{main-display0} and count the number of $\e$ with $m(\z,\e)=m$, that can be bounded above by 
\[
 \binom{n}{m}(k-1)^m \;\leq \; \biggl(\frac{en(k-1)}{m} \biggr)^m
\]
to finally obtain that
\begin{equation}\label{main-display}
\begin{split}
\P\Bigl(\bigl\lbrace & X(\hz,\Z) - X(\Z,\Z) \geq  \rho_n\widetilde C(R,S)\bigr\rbrace  \cap \bigl\lbrace m(\hz,\Z)\in (0,\delta n)\bigr\rbrace \Bigr) \\
&\leq\;  \sum_{m=1}^{\delta n} \biggl(\frac{en(k-1)}{m} \biggr)^m\exp\Bigl[ - \bigl( C(\pi,S) -\epsilon\bigr) \rho_n n m  \Bigr]  + \P(\mathcal C^c)\\
&\leq\;  \sum_{m=1}^{\delta n}  \exp\bigl(  -  m  \bigl[ (C(\pi,S) - \epsilon)\rho_n n -  \log e(k-1)n \bigr]\bigr) +  \P(\mathcal C^c)\\
&\leq\;  \sum_{m=1}^{\delta n}  \exp\bigl(  -  m  \bigl[ (C(\pi,S) - \epsilon)\rho_n n - \log n-  \log e(k-1) \bigr]\big) +  \P(\mathcal C^c)\,.
\end{split}
\end{equation}
Now, for  $\rho_n \gg \log n/n$ and  $C(\pi,S)>0$ or  $\rho_n = \log n/n$ and  $C(\pi,S)>1$, and by Theorem~\ref{teo_basico1} that implies $\P(\mathcal C^c)\to 0$, the last expression in \eqref{main-display} converges to 0 as $n\to\infty$.

For the dense case ($\rho_n=1$), we apply Theorem \ref{theoremX2_denso} to replace $C_t$ in \eqref{eq:prob_diff_X} by $C'_t$ as defined in \eqref{tildeC_denso} to obtain
\begin{equation}\label{eq:prob_diff_X_denso}
\begin{split}
\P( \{X(\e,\z) - &X(\z,\z)  >    \widetilde C(R,S)\}\cap \mathcal C \mid\Z=\z)\\
&\;\leq\; \exp\Bigl[ - \sup_{t\in(0,1]} \Bigl\{  n^2\bigl(  t \widetilde C(R, S) -  C'_t(R,S)\bigr)\Bigr\} + \epsilon'mn \Bigr]
\end{split}
\end{equation}
with $\widetilde C(R,S)$ given by Lemma \ref{lemma-rate-nosso} by 
\begin{equation}
\tilde C(R,S)\geq \sum_{b\neq b'}  \Bigl(  \sum_{a} [R^T\1]_{a} KL(P_{ab}\| P_{ab'}) - \epsilon\Bigr)R_{b'b}.
\end{equation}
We can write
\begin{equation}
\begin{split}
tKL(p\|q)- D_t(p\|q) &= \log\left( \left(\frac{1-p}{1-q}\right)^t\right) - \log\left(p\left(\frac{q(1-p)}{p(1-q)} \right)^{t} +1-p\right)
\end{split}.
\end{equation}
First,  observe that  $tKL(p\|q)- D_t(p\|q)=0$ if $p=q$. Since $S$ has no two identical columns, this case does not occur. Thus, we want to show that $tKL(p\|q)- D_t(p\|q)= C >0$. To do so, it is enough to prove that
\begin{equation}
\left(\frac{1-p}{1-q}\right)^t > p\left(\frac{q(1-p)}{p(1-q)} \right)^{t} +1-p.
\end{equation}
In fact it holds since the function $x^t$, $t\in(0,1)$, is concave implying that $p(q/p)^t + (1-p)((1-q)/(1-p))^t > 1$. Thus,
\begin{align*}
\sup_{t\in (0,1]}t\widetilde C(R,S)-C_t(R, S) &\geq\;  \bigl( C -\epsilon\bigr) \frac{m}n .
\end{align*}
We proceeded as before to obtain that 
\begin{equation}
\begin{split}
\P\Bigl(\bigl\lbrace & X(\hz,\Z) - X(\Z,\Z) \geq  \widetilde C(R,S)\bigr\rbrace  \cap \bigl\lbrace m(\hz,\Z)\in (0,\delta n)\bigr\rbrace \Bigr) \\
&\leq\;  \sum_{m=1}^{\delta n}  \exp\bigl(  -  m  \bigl[ (C - \epsilon)n - \log n -  \log e(k-1) \bigr]\big) + \P(\mathcal C^c)\,.
\end{split}
\end{equation}
The sum converges to 0 as $n\to \infty$ since $C>0$, and the result follows for the dense case. This concludes the proof of Theorem~\ref{teorema-chave}.

\end{proof}

To prove the strong consistency of the ICL estimator, we use the relation between the likelihood and the integrated conditional likelihood modularities.  
Next, we establish the strong consistency of the ICL estimator defined in \eqref{zstar2}.

\begin{proof}[Proof of Theorem~\ref{teorema-chave-icl}]
In the case of $\hz=\hz^\textsc{icl}$, the proof is analogous to that of Theorem~\ref{teorema-chave} with some minor modifications. Observe that, as 
\[
Q_{\textsc{icl}}(\hz^\textsc{icl})  - Q_{\textsc{icl}}(\Z)   \;\geq\;  0 
\] 
now by Lemma~\ref{lemma-qml-qb} we have, instead of  \eqref{eq238} that  
\begin{equation}\label{eq232}
\begin{split}
X(\hz,\Z) - X(\Z,\Z) \;&>\;   \rho_n\widetilde C(R,S)  - \frac{ (k^2+1) \log n}{n^2}\,.
\end{split}
\end{equation}
 Then, as in \eqref{eq:prob_diff_X} we obtain that
\begin{align*}
\P( & \bigl\{X(\e,\z) - X(\z,\z) >   \rho_n \widetilde C(R,S) - \frac{ (k^2+1) \log n}{n^2}\bigr\}\cap \mathcal C \,|\,\Z=\z)\\
&\;\leq \; \exp\Bigl[ - \sup_{t\in(0,1]} \Bigl\{  n^2\rho_n \bigl(  t \widetilde C(R, S) -  C_t(R,S)\bigr)\Bigr\} +  (k^2+1) \log n + \epsilon'\rho_n mn \Bigr]\,.
\end{align*}
As before, the sum in the last display of  \eqref{main-display} becomes  
\begin{align*}
\sum_{m=1}^{\delta n} \exp\Bigl( -  m \Bigl[ \bigl(C(\pi,S) - \epsilon \bigr) \rho_n n -(2+k^2)\log n  -  \log e(k-1) \Bigr]\Bigr) + \P(\mathcal C^c)
\end{align*}
so, to obtain the strong consistency of $\hz^\textsc{icl}$ we need a little stronger condition, namely $C(\pi,S)>0$  for $\rho_n\gg \log n/n$ or $C(\pi,S)> 2+k^2$ for $\rho_n= \log n/n$.
 For the dense case ($\rho_n=1$), the last display of \eqref{main-display} becomes  
\begin{align*}
\sum_{m=1}^{\delta n} \exp\Bigl\{ -  m \Bigl[ \bigl(C - \epsilon \bigr)  n -(2+k^2)\log n  -  \log e(k-1) \Bigr]\Bigr\}.
\end{align*}
Since $C>0$, the result follows.
\end{proof}

\section{Discussion}\label{discussion}

In this paper, we proved the optimality of the maximum conditional likelihood estimator of the communities in a general stochastic block model with unknown parameters. That is, we derived the strong consistency of the ML estimator at the sparsity regime $\rho_n= \log n/n$ and with a constant $C(\pi,S)$ above the phase transition threshold for strong consistency, something that remained open in the literature until now. We also show that the integrated conditional likelihood is consistent in the same optimal sparse regime $\rho_n= \log n/n$, although with a stronger condition on the constant $C(\pi,S)\geq 1+k^2$. It remains open to show if this condition can be relaxed for this estimator. 
Considering almost exact recovery, we prove that a sufficient condition is $n\rho_n\to\infty$ as $n\to\infty$, 
generalizing the results obtained in \citet{mossel2016} that proved the sufficiency and necessity of this 
condition in the planted bisection model with known parameters,  and of  \citet{van2018bayesian}, that showed the weak consistenct of ML and Bayesian estimators on the regime $\rho_n= \log^2 n/n$. These results fill in the gap in the theory of community detection, showing that maximum and integrated conditional likelihood are optimal for community detection.  

As it is usual in the literature, in this paper, we assume a known number of communities. But the results could be extended to a number of communities growing with $n$, as the parameter $\alpha$ controlling the minimum size of a community can decrease with the sample size, at a rate $\alpha \gg  (n\rho_n)^{-1/2}$. This extension to a set of growing $k$ may need some additional assumptions, and it is out of the scope of this work. But it would allow the study of some estimators for the number of communities based on maximum likelihood, such as the classical Bayesian Information Criterion (BIC). The estimation of $k$ was addressed in  \citet{cerqueira20estimating}, who proposed a Bayesian estimator, and proved the consistency under the regime of weak consistency  $n\rho_n\to\infty$.  It remains an open question to know if the estimation of $k$ can be achieved with the BIC for the regime $n\rho_n\to\infty$. 

The estimation of $k$ is a key question in many statistical models, and in particular is challenging in latent variable models.  This problem is known as statistical model selection and has been addressed by Antonio Galves and co-authors in several of their recent works. An incomplete list includes the articles \citet{galves-leonardi2008, galvesetal2012, galves2013, duarteetal2019} and the book  \citet{galvesbook}.    

\section*{Acknowledgments}
This work was produced as part of the activities of the \emph{Research, Innovation and Dissemination Center 
for Neuromathematics} (FAPESP 2013/07699-0). It was also supported by FAPESP projects (grants 2017/10555-0 and 2023/05857-9) and CNPq  project (grant 441884/2023-7).  FL is partially supported by a CNPq’s research fellowship, grant 311763/2020-0.

\appendix

\renewcommand{\thetheorem}{\Alph{section}.\arabic{theorem}}

\section{Concentration results for weak consistency}

In this section we are interested in a concentration result for the empirical probabilities given by  
\[
\widehat P_{ab}(\A|\hz) \;=\; \frac{\o{a}{b}(\hz,\A)}{\n{a}{b}(\hz)}\,,\qquad 1\leq a,b\leq k\,,
\]
where $\hz$ is any of the estimators defined by \eqref{zstar} or \eqref{zstar2}. As no specific properties of $\hz$ are known, we control these probabilities uniformly over all possible vectors $\e\in [k]^n$. 

For simplicity in the notation, from now on we write $R(\e,\Z)=R(\e)$, for all $\e\in[k]^n$. 

\begin{theorem}\label{teo_basico1}
Let $(\Z,\A)$ be generated from a SBM  with parameters $(\pi, \rho_nS)\in \Theta^{k}$, with $\rho_n=1$  or $\rho_n\to0$  with $n\rho_n\to\infty$ as $n\to\infty$. Then, we have, for all $\delta>0$,  that 
\begin{equation}
 \Bigl|\widehat P_{ab}(\A\mid \Z) - \rho_n S_{ab}\Bigr| <   \frac{\sqrt{\delta \rho_n \log\log n}}{n}
\end{equation}
simultaneously for all $a,b\in[k]$, with probability converging to one as $n\to\infty$. Moreover, 
\begin{equation}
 \Bigl|\widehat P_{ab}(\A\mid \e) - [P_{R(\e)}]_{ab} \Bigr| <  \sqrt{\frac{8\sma \rho_n n \log k}{\n{a}{b}(\e)}}
\end{equation}
simultaneously for all $a,b\in[k]$ and all $\e\in
\F$, provided $\alpha \gg (n\rho_n)^{-1/2}$, eventually almost surely as $n\to\infty$.
\end{theorem}

\begin{proof}
For any $\e\in [k]^n$ and $a,b\in [k]$  consider the event 
\[
B^{ab}_{n}(\e)  = \Bigl\{  \o{a}{b}(\A\mid,\e) - \n{a}{b} [P_{R(\e)}]_{ab}  > x \Bigr\}.
\] 
Conditioning on $\Z=\z$,  we have that
\[
\o{a}{b}(\e,\A) -  \n{a}{b}(\e) [P_{R(\e)}]_{ab} = \sum_{1\leq i,j \leq n}\1\{e_i=a,e_j=b\}(A_{ij} - P_{z_i,z_j})
\]
is a sum of $\n{a}{b}(\e)$ zero mean independent random variables.
Applying Chernoff's bound we have that 
\begin{equation}
\begin{split}
\P\Bigl(B_n^{ab}(\e)  \mid \Z=\z \Bigr) \leq \inf_{t>0} \exp(-xt) \prod_{1\leq i,j\leq n} \frac{\E\bigl( \exp[  tY_{ij}] \mid \Z=\z\bigr)}{\exp[t\E(Y_{ij}\mid \Z=\z)]}, 
\end{split}
\end{equation}
where $Y_{ij} = \1\{e_i=a,e_j=b\}A_{ij}$. A simple calculation gives
\[
\E(Y_{ij}\mid \Z=\z) = \rho_nS_{z_iz_j}\1\{e_i=a,e_j=b\}
\]
and
\begin{equation}
\begin{split}
\E(\exp[ t Y_{ij}] \mid\Z=\z) &= \rho_n S_{z_iz_j} \exp(t \1\{e_i=a,e_j=b\}) + 1 - \rho_n S_{z_iz_j}\\
&\leq \exp\bigl(\rho_n S_{z_iz_j}\bigl[\exp(t \1\{e_i=a,e_j=b\}) - 1\bigr] \bigr).
\end{split}
\end{equation}
Then 
\begin{equation}
\begin{split}
\P\Bigl(&B_n^{ab}(\e)  \mid \Z=\z \Bigr) =\inf_{t>0}  \exp\Bigl(-xt + \sum_{1\leq i,j\leq n} \rho_n S_{z_iz_j}L(t \1\{e_i=a,e_j=b\})\Bigr)
\end{split}
\end{equation}
 with $L(y) = e^y-y-1$. Now we use that $L(y)\leq y^2$,  for $y\in (0,1]$ and obtain that 
\begin{equation}
\P\Bigl( B_n^{ab}(\e) \mid \Z=\z \Bigr) \leq \inf_{ t\in(0,1]}  \exp\Bigl(-xt + t^2\rho_n \sma \n{a}{b}(\e)\Bigr).
\end{equation}
By differentiating with respect to $t$ we obtain that the minimum is attained at 
\begin{equation}\label{tminimo}
t = \frac{x}{2\rho_n\n{a}{b}(\e)\sma}.
\end{equation}
Observe that the same can be done to bound the probability of the event 
\[
\widetilde B^{ab}_{n}(\e)  = \Bigl\{  \o{a}{b}(\A\mid\e) - \n{a}{b} [P_{R(\e)}]_{ab}  <  -x \Bigr\},
\] 
therefore for any $\e$ we obtain that 
\begin{equation}\label{main_ineq_en}
\begin{split}
\P\Bigl(\,\sup_{a,b \in [k]} & \Bigl|\widehat P_{ab}(\A\mid \e) - [P_{R(\e)}]_{ab} \Bigr|>x \mid \Z=\z \Bigr) 
\leq 2k^2\exp\Bigl(- \frac{x^2\n{a}{b}(\e)}{4\rho_n \sma}\Bigr).
\end{split}
\end{equation}
To guarantee that $t$, given by \eqref{tminimo}, belongs to $(0,1]$ for $n$ large enough, we need to choose $x$ such that
\begin{equation}\label{eq:taxa_x}
x = o(\rho_n \n{a}{b}(\e)).
\end{equation}
First, we take $\e=\z$.  As $\cap_{a=1}^{k_0} \{n_a(\Z) \geq \pi_{\min}n/2\}$ holds with probability one for $n$ large enough, we take 
\[
x = \frac{\sqrt{\delta \rho_n \log\log n}}{n} 
\]
that satisfies \eqref{eq:taxa_x},  since $n\rho_n\to \infty$ as $n\to \infty$. Thus, we obtain  that 
\[
\sup_{a,b\in[k_0]}\;\Bigl|\widehat P_{ab}(\A\mid \Z) - \rho_nS_{ab}\Bigr|<  \frac{\sqrt{\delta \rho_n \log\log n}}{n}
\]
with probability converging to one as $n\to\infty$. This proves the first statement of Theorem~\ref{teo_basico1}. To prove the second statement, we need to control the probability \eqref{main_ineq_en} simultaneously  for all $\e\in\F$, with $\alpha \gg (n\rho_n)^{-1/2}$, so in this case we take 
\[
x = \sqrt{\frac{8\sma\rho_n n\log k}{\n{a}{b}(\e)}}.
\]
Observe that the condition $\alpha \gg (n\rho_n)^{-1/2}$ implies that $x=o(\rho_n)$ so  \eqref{eq:taxa_x} is satisfied for all $\e\in\F$,  therefore we obtain that 
\begin{equation}\label{borel-cantelli}
\begin{split}
\P\Bigl(\bigcup_{a,b \in [k]}\bigcup_{\e \in  \F} & \Bigl\{\Bigl|\widehat P_{ab}(\A\mid \e) - [P_{R(\e)}]_{ab} \Bigr|>x\Bigr\} \mid \Z=\z \Bigr)\\
&\leq 2\sum_{a,b} \sum_{\e\in \F} \exp\Bigl(- \frac{x^2\n{a}{b}(\e)}{4\rho_n \sma}\Bigr)\\
&\leq 2 k^2\exp\bigl(- n \log k \bigr)\,.
\end{split}
\end{equation}
Therefore 
\[
\Bigl|\widehat P_{ab}(\A\mid \e) - [P_{R(\e,\z)}]_{ab} \Bigr|<  \sqrt{\frac{8\sma \rho_n n \log k}{\n{a}{b}(\e)}}
\]
simultaneously for all $a,b \in[k]$ and $\e \in \F$, with probability one for $n$ large enough, by an application of the Borel Cantelli Lemma, as the bound in \eqref{borel-cantelli} is summable in $n$. As all the bounds are uniform on $\z$, they hold also when substituting $R(\e,\z)$ by $R(\e,\Z)$. 
\end{proof}

The next corollary of Theorem~\ref{teo_basico1} is a key technical result that allows us to approximate the function $\tau$ in \eqref{def-P-hat} by its  Taylor expansion and ensures that the argument belongs to a bounded interval, where $\tau$ has bounded derivatives, implying that it is locally Lipschitz.  To guarantee the local Lipschitz property, we need to assume that the community assignments are of linear size and the matrix $S$ has bounded entries. This result complements and corrects the work by \citet{bickel2009nonparametric}, that assumes that $\tau$ is a global Lipschitz function, what is not true in general, as discussed in \citet{van2018bayesian}. On the other hand, the use of the  locally Lipchitz property of $\tau$ and the restriction to a bounded interval allows us to obtain optimal results for weak consistency, under the condition $n\rho_n\to\infty$ as $n\to\infty$, instead of the condition $n\rho_n\gg\log^2 n$ obtained by  \citet{van2018bayesian}. 

\begin{corollary}\label{main-cor}
Under the same hypotheses of Theorem~\ref{teo_basico1}, for all $\delta>0$ we have that  
\[
\frac{\o{a}{b}(\e,\A)}{\rho_n\,\n{a}{b}(\e)} \;\in \; (\smi - \delta, \sma+\delta)
\]
simultaneously for all $a,b \in [k]$  and  $\e\in\F$ with  $\alpha\gg (n\rho_n)^{-1/2}$ , eventually almost surely as $n\to\infty$. 
\end{corollary}

\begin{proof}
By Theorem~\ref{teo_basico1} we have, for any $\delta>0$, that 
\begin{equation}\label{evento}
 \Bigl| \,\widehat P_{ab}(\A|\e) - [P_{R(\e)}]_{ab}\, \Bigr| \;<\; \sqrt{\frac{8\sma \rho_n n \log k}{\alpha^2n^2}} \; <\; \delta\rho_n\,,
 \end{equation}
simultaneously for all $a,b\in [k]$ and $\e\in\F$, with  $\alpha\gg (n\rho_n)^{-1/2}$,  eventually almost surely as $n\to\infty$. 
Moreover, as $[P_{R(\e)}]_{ab}$ is a convex combination of $P$, we have that 
$[P_{R(\e)}]_{ab} \in (\rho_n\smi, \rho_n\sma)$.
Then, the result follows. 
\end{proof}

Using Corollary~\ref{main-cor} and a Taylor expansion of the function $\tau$, for the sparse regime, we can derive the next result.

\begin{theorem}\label{theoremX}
Let $(\Z,\A)$ be generated from a SBM with parameters $(\pi, \rho_nS)$, with $\rho_n=1$ or $\rho_n\to0$ with $n\rho_n\to\infty$ as $n\to\infty$.
Then for all $\delta > 0$ we have that 
\begin{equation*}
|X(\e,\Z)- X(\Z,\Z)| \; <\; \delta\rho_n 
\end{equation*}
simultaneously for all $a,b\in [k]$ and $\e\in\F$, with  $\alpha\gg (n\rho_n)^{-1/2}$,  eventually almost surely as $n\to\infty$. 
\end{theorem}

\begin{proof}
For simplicity in the notation  we write $X(\e)=X(\e,\Z)$, $X(\Z)=X(\Z,\Z)$, $R(\e,\Z)=R(\e)$ and $R(\Z,\Z)=R(\Z)$. \\
Consider the sparse case with $\rho_n\to0$  with $n\rho_n\to\infty$ as $n\to\infty$. In this case, we use the Taylor expansion of the function $\tau$, given by 
\begin{equation*}
\tau(\rho_n x) \;=\; \rho_n \gamma(x) +  x \rho_n \log\rho_n + O\bigl(\rho_n^2x^2\bigr)
\end{equation*}
with
\[
\gamma(x) \;=\; x\log x - x\,.
\]
By Corollary~\ref{main-cor}, we obtain that
\begin{equation}\label{second_term}
\begin{split}
 \tau\Bigl(\dfrac{\o{a}{b}(\e,\A)}{\n{a}{b}(\e)}\Bigr) \;&=\;  
\rho_n \gamma\Bigl(\dfrac{\o{a}{b}(\e,\A)}{\rho_n\n{a}{b}(\e)}\Bigr) + \dfrac{\o{a}{b}(\e,\A)}{\rho_n\n{a}{b}(\e)} \rho_n\log \rho_n  + O(\rho_n^2)\\
\end{split}
\end{equation}
and a similar expression for $\dfrac{\o{a}{b}(\Z,\A)}{\n{a}{b}(\Z)}$, $[ P_{R(\e)}]_{ab}$ and $[ P_{R(\Z)}]_{ab}$. 
Therefore, we have that 
\begin{equation}\label{expressaoX}
\begin{split}
X(\e) - X(\Z) \;= \; 
&  \frac12\;  \Biggl( \; \sum_{a,b\in[k]}\,\frac{ \rho_n n_{ab}(\e)}{n^2} \Bigl[ \gamma\Bigl(\dfrac{\o{a}{b}(\e,\A)}{\rho_n \n{a}{b}(\e)}\Bigr) - \gamma\bigl( [ P_{R(\e)}]_{ab}/\rho_n \bigr)\Bigr] \\
&-  \sum_{a,b\in[k]}\,\frac{ \rho_n n_{ab}(\Z)}{n^2} \Bigl[ \gamma\Bigl(\dfrac{\o{a}{b}(\Z,\A)}{\rho_n\n{a}{b}(\Z)}\Bigr) - \gamma\bigl( [ P_{R(\Z)}]_{ab}/\rho_n \bigr)\Bigr]\Biggr)\\
& +  O(k^2\rho_n^2)
\end{split}
\end{equation}
as the second term in the right hand side of \eqref{second_term} cancels when making the difference $X(\e) - X(\Z)$, because 
\[
\sum_{a,b} \o{a}{b}(\e,\A) \;=\; \sum_{a,b} \o{a}{b}(\Z,\A)
\]
and
\[
\sum_{a,b} [R(\e)PR(\e)^T]_{ab}  \;=\; \sum_{a,b} [R(\Z)PR(\Z)^T]_{ab}\,.
\]
The function $\gamma$ has bounded derivatives in any bounded interval, and we know by 
Corollary~\ref{main-cor} that 
\[
\frac{\o{a}{b}(\e,\A)}{\rho_n\,\n{a}{b}(\e)} \;\in \; \Bigl[ \frac{\smi}2, \frac{3\sma}2\Bigr]
\]
simultaneously for all $a,b \in [k]$  and  $\e\in \F$, eventually almost surely as $n\to\infty$,  provided $\alpha \gg (n\rho_n)^{-1/2}$ for all $n$ large enough.  
Then, calling  $M$ the upper bound on the absolute value of the  derivative of $\gamma$ over this interval, we obtain that  
\begin{align*}
|X(\e)- X(\Z)| \;\leq\; &\;\frac12 M\Biggl( \sum_{1\leq a, b\leq k}\; \Bigl|\frac{\o{a}{b}(\e,\A)}{n^2} -[R(\e)PR(\e)^T]_{ab} \Bigr| \\
&+  \sum_{1\leq a, b\leq k}\; \Bigl|\frac{\o{a}{b}(\Z,\A)}{n^2} -[R(\Z)PR(\Z)^T]_{ab} \Bigr|\Biggr)\\
&+ O(k^2\rho_n^2)
\end{align*}
simultaneously for all $a,b \in [k]$  and  $\e\in \F$  with $\alpha \gg (n\rho_n)^{-1/2}$, eventually almost surely as $n\to\infty$. 
By  Theorem~\ref{teo_basico1} we obtain that 
\begin{align*}
|X(\e)- X(\Z)| \;&\leq\; \frac{M}{2} \sum_{1\leq a, b\leq k}\; \sqrt{\frac{8\sma\rho_n \n{a}{b}(\e) \log k}{n^3}} + \sqrt{\frac{8\sma\rho_n  \n{a}{b}(\Z)\log k}{n^3}} \\
&\quad+ O(k^2\rho_n^2)\\
&\leq\; k^2M \rho_n  \sqrt{\frac{8\sma \log k}{\rho_nn}} + O(k^2\rho_n^2)\\
&\leq \; \delta \rho_n
\end{align*}
simultaneously for all $a,b \in [k]$  and  $\e\in \F$  with $\alpha \gg (n\rho_n)^{-1/2}$, provided $\rho_n n\to\infty$ as $n\to\infty$, eventually almost surely.
This finishes the proof  of Theorem~\ref{theoremX}.
The dense case ($\rho_n = 1$) can be obtained directly by using the function $\tau$ in place of $\gamma$ in \eqref{expressaoX}, following the same arguments as for the sparse case.
\end{proof}

\section{Concentration results for strong consistency}

The bound on the difference $X(\hz)- X(\Z)$ given in  Theorem~\ref{theoremX} is sufficient to prove the weak consistency of $\hz^\textsc{ml}$ and 
$\hz^\textsc{icl}$, stated in Theorem~\ref{teorema-weak}. However, in order to prove the strong consistency, that is Theorem~\ref{teorema-chave},  we need a better control of the difference  $X(\e)- X(\Z)$, in terms of $m(\e,\Z)$, for any vector $\e\in [k]^n$ and of $\rho_n\to 0$, as $n\to \infty$.  This is done in Theorem~\ref{theoremX2} 
 and in the subsequent results in this section. To control this difference, we define, for any $a,b\in[k]$ and $\e,\z\in [k]^n$, the quantity
\begin{equation}\label{Wab}
W_{ab}(\e,\z) = \dfrac{\o{a}{b}(\e)}{n^2}  - \dfrac{\E(\o{a}{b}(\e))}{n^2}- \dfrac{\o{a}{b}(\z)}{n^2}+  \dfrac{\E(\o{a}{b}(\z))}{n^2}.
\end{equation} 
We also define the event
\begin{equation}\label{eq:event_C}
\mathcal C = \Bigl\{ \,\sup_{a,b\in[k]}\Bigl|\widehat P_{ab}(\A\mid \Z) - \rho_n S_{ab}\Bigr| <   \frac{\sqrt{ \rho_n \log \log n}}{n}
 \,\Bigr\}
\end{equation}
which holds with probability converging to one as $n\to\infty$, by Theorem~\ref{teo_basico1}.

In the remaining of this section, we will focus on  the sparse case, that is when $\rho_n\to0$ as $n\to\infty$. For the dense case, $\rho_n=1$, the complementary results are given in \ref{sec:dense}.

\begin{theorem}\label{theoremX2}
Let $(\Z,\A)$ be generated from a SBM with parameters $(\pi, \rho_nS)$, with $S>0$ fixed and $\rho_n\to 0$ with $\rho_n \geq  \log n/n$. Let $\z\in [k]^n$ be such that  $n_{a}(\z) \geq \pi_{\min}n/2$ for all $a\in[k]$ and $\e\in \F$. Then, for all $\epsilon>0$, there exists $\tilde\epsilon>0$ such that, if   $m=m(\z,\e)\leq \tilde\epsilon n$ and  
$n$ is sufficiently large, we have that
\begin{align*}
\P( \{ X(\e) - X(\Z)  >  x\}&\cap  \mathcal C\,|\,\Z=\z) \\
&\leq  \exp\Bigl[  - \sup_{t\in(0,1]} \Bigl\{  n^2\bigl(x t -  \rho_n C_t(R,S)\bigr)\Bigr\} + \epsilon\rho_n nm\Bigr]
\end{align*}
with $C_t(R,S)$ given by \eqref{tildeC}.
\end{theorem}

\begin{proof}
From now on, we will assume that \eqref{Waball} and \eqref{oes} in Corollary \ref{corrate} hold, as this happens simultaneously for all $a,b\in[k]$ and all $\e\in\F$ with $m(\z,\e)\leq \pi_{\min}^2n/16$ eventually almost surely as $n\to\infty$.    
Therefore, all subsequent computations are carried out under the event $\mathcal{C}$ and assuming the bounds in Corollary \ref{corrate}.
For simplicity of notation, we denote $\o{a}{b}(\e,\A)$ by $\o{a}{b}(\e)$, for any $\e\in[k]^n$. For $\z\in[k]^n$, we denote $R(\e,\z)=R(\e)$. Observe that 
\begin{equation*}
\begin{split}
X(\e)& - X(\z)\;= 
\; \frac12\Biggl(\sum_{1\leq a, b\leq k}\, \dfrac{\n{a}{b}(\e)}{n^2} \Bigl[\tau\Bigl(\dfrac{\o{a}{b}(\e)}{\n{a}{b}(\e)}\Bigr)  -  \tau\Bigl([ P_{R(\e)}]_{ab}\Bigr)  \Bigr]\\
&\qquad\qquad\qquad\qquad\qquad +  \dfrac{\n{a}{b}(\z)}{n^2}  \Bigl[\tau\Bigl([ P_{R(\z)}]_{ab}\Bigr)  - \tau\Bigl(\dfrac{\o{a}{b}(\z)}{ \n{a}{b}(\z)}\Bigr)  \Bigr]\Biggl)\,,
\end{split}
\end{equation*}
with 
\[
\tau(x) = x\log x + (1-x) \log(1-x)\,.
\]
By the Gibbs inequality, we have, in the case of $x,y\in (0,1)$ that 
\[
\tau(x) \geq x\log y + (1-x) \log(1-y)\,.
\]
We use this inequality for $x=[ P_{R(\e)}]_{ab}$ and $y=\dfrac{\o{a}{b}(\e)}{\n{a}{b}(\e)}$ and we obtain that
\[
 \tau\Bigl([ P_{R(\e)}]_{ab}\Bigr)  \;\geq\; [ P_{R(\e)}]_{ab}\log \dfrac{\o{a}{b}(\e)}{\n{a}{b}(\e)} + (1 - [ P_{R(\e)}]_{ab}\log \Bigl(1-\dfrac{\o{a}{b}(\e)}{\n{a}{b}(\e)}\Bigr)\,.
\]
We also use it for $x=\dfrac{\o{a}{b}(\z)}{\n{a}{b}(\z)}$ and $y= \rho_n S_{ab}$ and we obtain that
\[
 \tau\Bigl(\dfrac{\o{a}{b}(\z)}{\n{a}{b}(\z)}\Bigr)  \;\geq\; \dfrac{\o{a}{b}(\z)}{\n{a}{b}(\z)}\log\rho_n S_{ab} + (1 - \dfrac{\o{a}{b}(\z)}{\n{a}{b}(\z)})\log \Bigl(1-\rho_n S_{ab}\Bigr)
\]
From the definition in \eqref{pmixt}, it follows that
\[
[ P_{R(\z)}]_{ab} = \rho_nS_{ab}\,.
\]
Then we obtain the bound 
\begin{equation}\label{XX}
\begin{split}
X(\e) - X(\z)\;\leq\;&  \frac12\sum_{1\leq a, b\leq k}\, \Bigl(\dfrac{\o{a}{b}(\e)}{n^2}- 
\E\Bigl(\dfrac{\o{a}{b}(\e)}{n^2}\Bigr)
\Bigr) \log \Bigl(\dfrac{\o{a}{b}(\e)}{\n{a}{b}(\e)}\Bigr) \\
&+  \Bigl(\E\Bigl(\dfrac{\o{a}{b}(\e)}{n^2}\Bigr) - \frac{\o{a}{b}(\e)}{n^2}\Bigr) \log\Bigl(  1 -  \frac{\o{a}{b}(\e)}{\n{a}{b}(\e)}\Bigr)\\
&+ \Bigl(\E\Bigl(\dfrac{\o{a}{b}(\z)}{n^2}\Bigr)- \dfrac{\o{a}{b}(\z)}{n^2}\Bigr) \log \rho_nS_{ab}\\
&+ \Bigl(\dfrac{\o{a}{b}(\z)}{n^2} - \E\Bigl(\dfrac{\o{a}{b}(\z)}{n^2}\Bigr)\Bigr)\log ( 1- \rho_nS_{ab})\,,
\end{split}
\end{equation}
where, by an abuse of notation, we write $\E\Bigl(\dfrac{\o{a}{b}(\e)}{n^2}\Bigr) = \E\Bigl(\dfrac{\o{a}{b}(\e)}{n^2}\,|\,\Z=\z\Bigr)$. 
Observe that we can write 
\begin{align*}
\log \Bigl(\dfrac{\o{a}{b}(\e)}{\n{a}{b}(\e)}\Bigr) \;=\; \log \Bigl[ \rho_nS_{ab} \Bigl(1 + \frac{ \o{a}{b}(\e)/\n{a}{b}(\e) - \rho_nS_{ab}}{\rho_nS_{ab}}\Bigr)\Bigr]
\end{align*}
and similarly 
\begin{align*}
\log \Bigl(1 - \dfrac{\o{a}{b}(\e)}{\n{a}{b}(\e)}\Bigr) \;=\; \log \Bigl[ \bigl(1-\rho_nS_{ab}\bigr) \Bigl(1 + \frac{ \rho_nS_{ab} - \o{a}{b}(\e)/\n{a}{b}(\e)}{1-\rho_nS_{ab}}\Bigr)\Bigr].
\end{align*}
On the event $\mathcal C$, we have, by  Corollary~\ref{corrate}  and the assumptions $m(\z,\e)\leq \tilde\epsilon n$ and $\rho_n\geq \log n /n$,  that
\begin{equation}\label{toprove}
\Bigl| \frac{\o{a}{b}(\e)}{\n{a}{b}(\e)} - \rho_nS_{ab} \Bigr| \leq \Bigl| \frac{\o{a}{b}(\e)}{\n{a}{b}(\e)} -  \frac{\o{a}{b}(\z)}{\n{a}{b}(\z)} \Bigr|  + \Bigl| \frac{\o{a}{b}(\z)}{\n{a}{b}(\z)} - \rho_nS_{ab} \Bigr| < \epsilon\rho_n
\end{equation}
simultaneously for all $a,b\in[k]$, provided $\tilde\epsilon>0$ is sufficiently small and $n$ is sufficiently large.  Then by using the inequality $\log(1+x)\leq |x|$ valid  for $x > -1$, 
the difference $X(\e) - X(\z)$  in \eqref{XX} can be upper bounded by 
\begin{equation}\label{XXX}
\begin{split}
 \frac12\sum_{1\leq a, b\leq k}\, &W_{ab}(\e,\z) \log  \frac{\rho_n S_{ab}}{1-\rho_n S_{ab}}\\
& + \frac12\sum_{1\leq a, b\leq k} \frac{1}{ \rho_nS_{ab}} \; \Bigl|\E\Bigl(\dfrac{\o{a}{b}(\e)}{n^2}\Bigr) - \frac{\o{a}{b}(\e)}{n^2}\Bigr|\,
\Bigl| \dfrac{\o{a}{b}(\e)}{\n{a}{b}(\e)} - \rho_nS_{ab}\Bigr|\\
& + \frac12\sum_{1\leq a, b\leq k} \frac{1}{1- \rho_nS_{ab} } \; \Bigl|\E\Bigl(\dfrac{\o{a}{b}(\e)}{n^2}\Bigr) - \frac{\o{a}{b}(\e)}{n^2}\Bigr|\,
\Bigl| \dfrac{\o{a}{b}(\e)}{\n{a}{b}(\e)} - \rho_nS_{ab}\Bigr|\,,
\end{split}
\end{equation}
where $W_{ab}(\e,\z)$ was defined in \eqref{Wab}. 

From this point on we assume, without loss of generality, that  $\rho_n S_{ab}\leq 1-\rho_nS_{ab}$ for all $a,b$. Observe that this is true in the sparse case where $\rho_n\to 0$, for $n$ large enough. Then, we can bound above \eqref{XXX} by the sum of two terms 
$W(\e,\z) + \widetilde W(\e,\z)$ with 
\[
W(\e,\z) = \frac12 \sum_{1\leq a,b\leq k} W_{ab}(\e,\z)  \log  \frac{\rho_n S_{ab}}{1-\rho_n S_{ab}}
\]
and
\[
\widetilde W(\e,\z) = \sum_{1\leq a, b\leq k} \frac{1}{ \rho_nS_{ab}} \; \Bigl|\E\Bigl(\dfrac{\o{a}{b}(\e)}{n^2}\Bigr) - \frac{\o{a}{b}(\e)}{n^2}\Bigr|\,
\Bigl| \dfrac{\o{a}{b}(\e)}{\n{a}{b}(\e)} - \rho_nS_{ab}\Bigr|.
\]
We will now obtain a bound for $\widetilde W(\e,\z)$.
By \eqref{Waball} in Corollary~\ref{corrate} we have 
\begin{equation}\label{rateWab}
\max_{ab} \;  \Bigl|W_{ab}(\e,\z)\Bigr| \;\leq\; \frac{c\rho_n m(\e,\z)}{n}
\end{equation}
with $c$ a given constant only depending on $S$. 
Then on $\mathcal C$ we obtain the bound
\begin{equation}\label{bound1}
\begin{split}
 \Bigl|\E\Bigl(\dfrac{\o{a}{b}(\e)}{n^2}\Bigr) - \frac{\o{a}{b}(\e)}{n^2}\Bigr| &\;\leq\;   \Bigl|W_{ab}(\e,\z)\Bigr|  + \Bigl| \frac{\o{a}{b}(\z)}{n^2}- \E\Bigl(\dfrac{\o{a}{b}(\z)}{n^2}\Bigr)\Bigr| \\
 &\;\leq\; \frac{c\rho_n m(\e,\z)}{n} +  \frac{\sqrt{\rho_n \log \log n}}{n}
\end{split}  
 \end{equation}
simultaneously for all $a,b\in[k]$. Observe also that, by the triangle inequality, we can write 
\begin{equation}\label{bound2}
\begin{split}
\Bigl| \dfrac{\o{a}{b}(\e)}{\n{a}{b}(\e)} - \rho_nS_{ab}\Bigr| \;&\leq\; 
\Bigl| \dfrac{\o{a}{b}(\e)}{\n{a}{b}(\e)} -\dfrac{\o{a}{b}(\z)}{\n{a}{b}(\z)} \Bigr| + \Bigl| \dfrac{\o{a}{b}(\z)}{\n{a}{b}(\z)} - \rho_nS_{ab}\Bigr|.
\end{split}
\end{equation}
 Then, on $\mathcal C$ and by \eqref{oes} in Corollary~\ref{corrate} we also obtain the upper bound 
\begin{equation}\label{bound-2}
 \Bigl| \dfrac{\o{a}{b}(\e)}{\n{a}{b}(\e)} - \rho_nS_{ab}\Bigr| \;\leq\;  \frac{c\rho_n m(\e,\z)}{n} +  \frac{\sqrt{\rho_n \log\log n}}{n}. \end{equation}
Then, the bounds \eqref{bound1} and \eqref{bound-2} implies that
\[
\widetilde W(\e,\z) \leq  \frac{2k^2}{\rho_n\smi} \Bigl( \frac{c^2\rho_n^2m^2}{n^2} +  \frac{\rho_n \log\log n}{n^2}  \Bigr) \leq \frac{C (\rho_nm^2+ \log\log n)}{n^2}
\]
with $C$ a constant only depending on $S$ and $k$. 
By Proposition~\ref{lemma-chernoff}, we obtain that 
\begin{align*}
\P\Bigl( \{X(\e) - &X(\z)  > x\} \cap \mathcal C \mid \Z=\z \Bigr)\\
&\leq\; \P\Bigl( W(\e,\z) > x -  \widetilde W(\e,\z)  \mid \Z=\z  \Bigr)\\
 &\leq\; \exp\Bigl[ - \sup_{t\in (0,1]} \Bigl\{ n^2 (xt-  \rho_n C_t(R,S))\Bigr\} + \phi(n,m)\Bigr]\,,
\end{align*}
with 
\[
\phi(n,m) = C( \rho_n m^2+ \log\log n+ \rho_n^2 nm)
\]
and $C$ a constant only depending on $S$ and $k$. 
Assuming $m(\z,\e)\leq \tilde\epsilon n$, with $\tilde\epsilon$ sufficiently small,  and $n\rho_n\geq \log n$, we obtain that 
\[
 \phi(n,m) \leq \epsilon \rho_nnm 
\]
for $n$ sufficiently large, and the result follows.
\end{proof}

We conclude this section by proving the key results used in the proof of Theorem~\ref{theoremX2}, namely 
Corollary~\ref{corrate}, derived from Propositions~\ref{lemma-chernoff2},  and Proposition~\ref{lemma-chernoff}, giving the main result that derives in the optimality of the ML and ICL estimators. 

The following proposition is valid for the sparse and dense regimes. 

\begin{proposition} \label{lemma-chernoff2}
Let $(\Z,\A)$ be generated from a SBM with parameters $(\pi, \rho_nS)$,  with $\rho_n=1$ or $\rho_n\to 0$ as $n\to\infty$. 
Then for all $a,b\in[k]$ and $\e\in[k]^n$ we have that
\begin{equation*}
\begin{split}
 \P\Bigl(\,  & |W_{ab}(\e,\Z)|  \; >\; x\,|\,\Z=\z  \, \Bigr) \;\leq\; 2 \exp\Bigl(- xn^2 + 6\rho_n  \sma  m(\e,\z) n\Bigr)\,.
\end{split}
\end{equation*}
\end{proposition}

\begin{proof}
Let 
\[
Y^{ab} = \sum_{1\leq i<j\leq n} Y^{ab}_{ij}
\]
with  
\begin{align*}
Y_{ij}^{ab} = \frac{1}{n^2}&\Bigl( \mathds{1}\{e_i=a,e_j=b\}+ \mathds{1}\{e_i=b,e_j=a\} \\
&- \mathds{1}\{z_i=a,z_j=b\} - \mathds{1}\{z_i=b,z_j=a\} \Bigr)A_{ij}\,.
\end{align*}
Conditioned on $\Z=\z$, the variables $Y_{ij}^{ab}$, for $1\leq i<j\leq n$ are independent. 
By applying Chernoff's bound, we obtain that
\begin{align*}
\P( \,Y^{ab} - \E(Y^{ab}) > x\,|\,\Z=\z) \;&\leq\; \inf_{t>0} \;\exp\bigl(- xt  \bigr)\prod_{1\leq i<j\leq n}\E\bigl( \exp[ t(Y^{ab}_{ij}-\E(Y^{ab}_{ij}))]\bigr)
\end{align*}
where the expectations are given $\Z=\z$.  Define
\begin{align*}
\mathds{1}_{ij}(\e,\z) \;=\; & \mathds{1}\{e_i=a,e_j=b\}+ \mathds{1}\{e_i=b,e_j=a\} \\
&- \mathds{1}\{z_i=a,z_j=b\} - \mathds{1}\{z_i=b,z_j=a\}\,.
\end{align*}
We have
\[
\E(Y_{ij}^{ab}\mid \Z=\z) = \frac{\rho_nS_{z_iz_j}\mathds{1}_{ij}(\e,\z)}{n^2}\,.
\]
On the other hand
\begin{align*}
\E(\exp(tY^{ab}_{ij})\mid \Z=\z) \;&=\; \rho_nS_{z_iz_j} \exp\Bigl(\frac{t\mathds{1}_{ij}(\e,\z)}{n^2}\Bigr)+  1 - \rho_nS_{z_iz_j}\\
&=\;  1 + \rho_nS_{z_iz_j} \Bigl[ \exp\Bigl(\frac{t\mathds{1}_{ij}(\e,\z)}{n^2}\Bigr) - 1\Bigr]\\
&\leq \; \exp\Bigl(  \rho_nS_{z_iz_j} \Bigl[\exp\Bigl(\frac{t\mathds{1}_{ij}(\e,\z)}{n^2}\Bigr) - 1\Bigr]\Bigr)\,.
\end{align*}
Then 
\begin{align*}
\E(\exp(&t(Y^{ab}_{ij}- \E(Y^{ab}_{ij}\mid \Z=\z))\mid \Z=\z) \;\leq\;\exp\Bigl(  \rho_nS_{z_iz_j} L\Bigl( \frac{t\mathds{1}_{ij}(\e,\z)}{n^2}\Bigr) \Bigr)
\end{align*}
with $L(y)=e^y-y-1$.
Observe that 
\begin{equation}\label{eq:sum_1ij}
\begin{split}
\sum_{1\leq i<j\leq n}\mathds{1}_{ij}(\e,\z)  = \sum_{i,j=1}^n\mathds{1}\{e_i=a,e_j=b\} - \mathds{1}\{z_i=a,z_j=b\}.
\end{split}
\end{equation}
For fixed $a,b\in[k]$, the difference of the indicator functions in the summand of \eqref{eq:sum_1ij} can be nonzero only if at least one of the vertices $i$ or $j$ is misclassified. Let $M=\{i:e_i\neq z_i\}$ with $|M|=m$. Since the number of pairs $(i,j)$ such that $i\in M$ or $j\in M$ is at most $2nm$ and each term has absolute value at most $1$, we obtain
\begin{equation}
\sum_{1\leq i<j\leq n}|\mathds{1}_{ij}(\e,\z)| \leq 2nm.
\end{equation}

 Then, taking  $t=n^2$  and using that $L(y)\leq 3|y|$ for $y\in [-2,2]$ we have that
\begin{align*}
\P( Y^{ab}& - \E(Y^{ab})  > x\,|\,\Z=\z) \;\\
&\leq\; \exp\bigl(- xn^2 \bigr) \prod_{1\leq i<j\leq n}\exp\Bigl[3\rho_nS_{z_iz_j}|\mathds{1}_{ij}(\e,\z)|\Bigr]\\
&\leq\; \exp\Bigl(- xn^2 + 6\rho_n  \sma  m(\e,\z) n\Bigr).
\end{align*}
The same can be obtained for left deviations. Therefore 
\begin{equation*}
\P( |W_{ab}(\e,\z)| > x\,|\,\Z=\z) \;\leq\;  2 \exp\Bigl(- xn^2 + 6\rho_n  \sma  m(\e,\z) n\Bigr)\,. \qedhere
\end{equation*}
\end{proof}

\begin{corollary}\label{corrate}
For  $\rho_n\geq \log n/n$, there exists a constant $c>0$, only depending on $S$, such that, given $\Z=\z$
\begin{equation}\label{Waball}
|W_{ab}(\e,\z)|\; \leq\;   \frac{c \rho_n m(\e,\z)}{n}
\end{equation}
simultaneously for all $a,b\in[k]$ and $\e\in[k]^n$, eventually almost surely as $n\to\infty$. 
Moreover, if $\z$ satisfies that $n_{a}(\z) \geq \pi_{\min}n/2$ for all $a\in[k]$, we have that 
\begin{equation}\label{oes}
\Bigl| \dfrac{\o{a}{b}(\e)}{\n{a}{b}(\e)} - \dfrac{\o{a}{b}(\z)}{\n{a}{b}(\z)}\Bigr| \;\leq\;  \frac{c \rho_n m(\e,\z)}{n}
\end{equation}
simultaneously for all $a,b\in[k]$ and $\e\in[k]^n$ with $m(\z,\e)\leq \pi_{\min}^2n/16$, eventually almost surely as $n\to \infty$.
\end{corollary}
\begin{proof}
We use the concentration of Proposition~\ref{lemma-chernoff2} and we use  a union bound over the number of differences $m$ and the community assignments $\e\in [k]^n$ such that $m(\e,\z)=m$ to obtain that 
\begin{align*}
\P\Bigl( \; \sup_{a,b,m,\e} &|W_{ab}(\e,\z)| > x\,|\,\Z=\z\Bigr) \;\\
&\leq\; 2\sum_{a,b\in [k]}\sum_{m=1}^n \binom{n}{m} (k-1)^m \exp\Bigl(- xn^2 + 6\rho_n  \sma  m n\Bigr)\\
&\leq\; 2k^2\sum_{m=1}^n\exp\Bigl(- xn^2 + 6\rho_n  \sma  m n+ m\log (k-1)n\Bigr).
\end{align*}
Taking  $x = c\rho_n m/n$,  we can bound  
the probability above by 
\[
2k^2\sum_{m=1}^n\exp\Bigl[ -m\Bigl( c\rho_nn  - 6\sma  \rho_n  n - \log (k-1)n\Bigr)\Bigr]\,.
\]
If $\rho_n n \geq \log n$ and  $c> 6\sma$, the series above is summable over $n$, therefore we conclude that 
\begin{equation*}
|W_{ab}(\e,\z)|\; \leq\;   \frac{c \rho_n m(\e,\z)}{n}
\end{equation*}
simultaneously for all $a,b\in[k]$ and $\e\in[k]^n$, eventually almost surely as $n\to\infty$. 
This proves the first statement of Corollary~\ref{corrate}. To prove \eqref{oes}, we add and substract  $o_{ab}(\z)/n_{ab}(\e)$ and apply the triangle  inequality, to obtain  that
\begin{equation}\label{eq:cor_diff}
\begin{split}
\Bigl| \dfrac{\o{a}{b}(\e)}{\n{a}{b}(\e)} - \dfrac{\o{a}{b}(\z)}{\n{a}{b}(\z)}\Bigr| \;\leq\;&
\frac{1}{\n{a}{b}(\e)}\Bigl| \o{a}{b}(\e)- \o{a}{b}(\z)\Bigr|\\
&+ \frac{ \o{a}{b}(\z)}{\n{a}{b}(\z)\n{a}{b}(\e)}\Bigl| \n{a}{b}(\z)- \n{a}{b}(\e)\Bigr|\,.
\end{split}
\end{equation}
To bound the first term in the sum on the right-hand side of \eqref{eq:cor_diff}, we add and subtract the corresponding expected values $\E(o_{ab}(\e))$ and $\E(o_{ab}(\z))$, apply the triangle inequality, and combine \eqref{Waball} with the fact that
\[
 \Bigl| \dfrac{\E(\o{a}{b}(\e))}{n^2} - \dfrac{\E(\o{a}{b}(\z))}{n^2}\Bigr| \;\leq\;  \frac{3\sma\rho_n m(\e,\z)}{n} 
\]
obtaining the upper bound
 \begin{align*}
\frac{c'\rho_n m(\e,\z)n}{\n{a}{b}(\e)}
\end{align*}
with $c'>0$ a given constant only depending on $S$. On the other hand,  by Theorem~\ref{teo_basico1} we have that $o_{ab}(\z)/n_{ab}(\z) = o(\rho_n)$ simultaneously for all $a,b\in[k]$, eventually almost surely as $n\to\infty$. Combining this with the fact that $|\n{a}{b}(\z) - \n{a}{b}(\e)| \leq 2nm(\e,\z)$, we conclude that the second term on the right-hand side of \eqref{eq:cor_diff} is bounded above by 
\[
\frac{c''\rho_n m(\e,\z)n}{\n{a}{b}(\e)}.
\]
Observe that $\n{a}{b}(\e)\geq \n{a}{b}(\z)-2nm(\e,\z)\geq \pi_{\min}^2n^2/8$  for any $\e\in[k]^n$ with $m(\z,\e)\leq \pi_{\min}^2n/16$. Thus,
\begin{equation}
\Bigl| \dfrac{\o{a}{b}(\e)}{\n{a}{b}(\e)} -  \dfrac{\o{a}{b}(\z)}{\n{a}{b}(\z)}\Bigr| \;\leq\;  \frac{c\rho_n m(\e,\z)}{n}
\end{equation}
simultaneously for all $a,b\in[k]$ and $\e\in[k]^n$ with $m(\z,\e)\leq \pi_{\min}^2n/16 $, eventually almost surely as $n\to\infty$. 
\end{proof}

 The next proposition is derived here only on the sparse case $\rho_n\to 0$. The version for the dense case is given in Proposition \ref{lemma-chernoff_denso}.

\begin{proposition}\label{lemma-chernoff}
Let $(\Z,\A)$ be generated from a SBM with parameters $(\pi, \rho_n S)$, with $\rho_n\to0$ as $n\to\infty$. Then,
for all $\z, \e \in [k]^n$  and for 
\begin{align*}
W(\e,\z) = & \frac12 \sum_{1\leq a, b\leq k}\, W_{ab}(\e,\z)\log \frac{ \rho_n S_{ab}}{1-\rho_n S_{ab}}
\end{align*}
we have that 
\begin{equation*}
\begin{split}
 \P\Bigl(   W(\e,\z)   > x & \mid \Z=\z   \Bigr) \\
 &\leq \exp\Bigl[ - \sup_{t\in (0,1]} \Bigl\{ n^2 (xt-  \rho_n C_t(R,S))\Bigl\}  +  ck^2\rho_n^2 nm  + D\rho_nm^2\Bigr]\,,
\end{split}
\end{equation*}
with  
\begin{equation}\label{tildeC}
C_t(R,S) = \sum_{a}\sum_{b\neq b'} [R^T\1]_{a}  K_t(S_{ab} \| S_{ab'})R_{b'b}  \,,
\end{equation}
and 
\begin{equation}\label{Kt}
K_t(p\|q) =   p^{1-t}q^t + t p \log \Bigl(\frac{p}{q}\Bigr) - p
\end{equation}
where $c$ and $D$ are constants only depending on $S$.
\end{proposition}

\begin{proof}
We consider the sparse case with $\rho_n \to 0$ as $n \to \infty$. In this regime, $\log(1-\rho_n S_{ab})$ is asymptotically negligible and of order $\rho_n$, so by Corollary~\ref{corrate} we have that
\[ 
\Bigl|  \frac12 \sum_{1\leq a, b\leq k}\, W_{ab}(\e,\z)\log (1 - \rho_nS_{ab}) \Bigr| \leq \frac{ck^2\rho_n^2 m}{n}
\]
for some constant $c>0$, simultaneously for all $a,b\in[k]$ and $\e\in[k]^n$, eventually almost surely as $n\to\infty$.  Thus, we have that
\[
W(\e,\z)\leq \frac12 \sum_{1\leq a, b\leq k}\, W_{ab}(\e,\z)\log S_{ab} + \frac{ck^2 \rho_n^2m}{n} 
\]
as
\[
 \sum_{1\leq a, b\leq k}\, W_{ab}(\e,\z) = 0.
\]

By \eqref{Wab},  we can define 
\begin{equation*}
\begin{split}
W'(\e,\z) =  \frac12 \sum_{1\leq a, b\leq k}\, W_{ab}(\e,\z)\log S_{ab} =  \frac1{2n^2} \sum_{1\leq a, b\leq k}\,\,\sum_{1\leq i,j\leq n} [ Y_{ij}^{ab} - \E(Y_{ij}^{ab}) ]
\end{split}
\end{equation*}
with 
\[
Y_{ij}^{ab} = \log  S_{ab}\Bigl( \mathds{1}\{e_i=a,e_j=b\}A_{ij}\, - \mathds{1}\{z_i=a,z_j=b\}A_{ij} \Bigr)\,.
\]
Let  
\[
Y_{ij} = \sum_{1\leq a, b\leq k}Y_{ij}^{ab} .
\]
Then it can be written as
\[
Y_{ij} = \log\Bigl( \frac{S_{e_ie_j}}{S_{z_iz_j}}\Bigr) A_{ij}\,.
\]
Conditioned on $\Z=\z$, the variables $Y_{ij}$, for $1\leq i<j\leq n$ are independent, $Y_{ji} = Y_{ij}$ and $Y_{ii}=0$ for all $i=1,\dots,n$. 
Then applying Chernoff's bound, we  obtain that 
\begin{align*}
\P( W'(\e,\z) > x\mid &\Z=\z) =  \P\Bigl( \,\sum_{1\leq i<j\leq n} Y_{ij} - \E(Y_{ij}) > n^2x\,|\,\Z=\z\Bigr) \\
&\leq\; \inf_{t\in (0,1]} \;\Bigl\{ \exp\bigl(- n^2xt  \bigr)\prod_{1\leq i<j\leq n}\E\bigl( \exp[ t(Y_{ij}-\E(Y_{ij}))]\bigr)\Bigr\}
\end{align*}
where the expectations are given $\Z=\z$. First, observe that
\[
\E(tY_{ij}\mid \Z=\z) = t\rho_n S_{z_iz_j}\log\Bigl( \frac{S_{e_ie_j}}{S_{z_iz_j}}\Bigr)\,.
\]
On the other hand, using that $e^x \geq 1+x$, we can write
\begin{align*}
\E(\exp(tY_{ij})\mid \Z=\z) &= \rho_n S_{z_iz_j} \exp\Bigl(t\log\Bigl( \frac{S_{e_ie_j}}{S_{z_iz_j}}\Bigr)
\Bigr)+  1 - \rho_n S_{z_iz_j}\\
&=\;  1 + \rho_n S_{z_iz_j} \Bigl[ \exp\Bigl(t\log\Bigl( \frac{S_{e_ie_j}}{S_{z_iz_j} }\Bigr)
\Bigr) - 1\Bigr]\\
&\leq \; \exp \Bigl\{   \rho_n S_{z_iz_j}\Bigl[ \exp\Bigl(t\log\Bigl( \frac{S_{e_ie_j}}{S_{z_iz_j}}\Bigr)
\Bigr) - 1\Bigr]\Bigr\}\,.
\end{align*}
Combining the two expectations, we obtain
\begin{align*}
\E(\exp[t (Y_{ij} - \E(Y_{ij}\mid \Z=\z)]\mid \Z=\z) \;&\leq\; \exp\Bigl\{  \rho_n S_{z_iz_j} L\Bigl[t\log\Bigl( \frac{S_{e_ie_j}}{S_{z_iz_j}}\Bigr)\Bigr]\Bigr\}\,,
\end{align*}
with $L(x) = e^x-x-1$. Therefore
\begin{equation}\label{ineqchernoff}
\begin{split}
\P( \,W'(&\e,\z) > x\,|\,\Z=\z) \;\\
&\leq\; \;\inf_{t\in (0,1]} \Bigl\{ \;\exp\Bigl(  - n^2x t + \sum_{1\leq i<j\leq n}  \rho_n S_{z_iz_j} L\Bigl[t\log\Bigl( \frac{S_{e_ie_j}}{S_{z_iz_j}}\Bigr)\Bigr]\Bigr)\Bigr\}\\
&= \;\exp\Bigl( - \sup_{t\in(0,1]} \; \Bigl\{ n^2x  t -  \rho_n\sum_{1\leq i<j\leq n}    S_{z_iz_j} L\Bigl[t\log\Bigl( \frac{S_{e_ie_j}}{S_{z_iz_j}}\Bigr)\Bigr]\Bigr\}\Bigr)\,.
\end{split}
\end{equation}
Observe that for all $1\leq i<j\leq n$ we have 
\begin{equation*}
\begin{split}
S_{z_iz_j} L\Bigl[t\log\Bigl( \frac{S_{e_ie_j}}{S_{z_iz_j}}\Bigr)\Bigr] \;&=\;  S_{z_iz_j}^{1-t} S_{e_ie_j}^t+t S_{z_iz_j}\log \Bigr(\frac{S_{z_iz_j}}{S_{e_ie_j}}\Bigr) -  S_{z_iz_j}\\
&= K_t(S_{z_iz_j} \| S_{e_ie_j}),
\end{split}
\end{equation*}
with $K_t$ defined by \eqref{Kt}. 
Using that matrix $S$ is symmetric and denoting $R=R(\e,\z)$ we can write 
\begin{equation}\label{Ktineq}
\begin{split}
\sum_{1\leq i<j\leq n}K_t(S_{z_iz_j} \| S_{e_ie_j})\;&=\;  \frac{n^2}2\;\sum_{a',a}\sum_{b',b} R_{a'a}K_t(S_{ab} \| S_{a'b'})R_{b'b}\\
&=\;  n^2\;\sum_{b\neq b'}\sum_{a} R_{aa}K_t(S_{ab} \| S_{ab'})R_{b'b} \\
&\quad + \frac{n^2}2\;\sum_{a\neq a'}\sum_{b\neq b'} R_{a'a}K_t(S_{ab} \|S_{a'b'})R_{b'b}.
\end{split} 
\end{equation}
Using the fact that $m/n = \sum_{a\neq a'}R_{a'a}$ and that $K_t(S_{ab},S_{a'b'})\leq D$ for all $a,b,a',b'\in [k]$, where $D$ is a constant depending only on $S$, we observe that the second sum on the right-hand side of \eqref{Ktineq} is bounded above by $m^2D$. Using that $R_{aa}\leq [R^T\one]_a$ for all $a\in[k]$, we obtain from \eqref{ineqchernoff} and \eqref{Ktineq} that
\begin{equation*}
\begin{split}
\P\Bigl(W(\e,\z) >x & \mid \Z=\z\Bigr) \leq \P\Bigl(W'(\e,\z) >x - \frac{ck^2\rho_n^2m}{n} \,|\,\Z=\z\Bigr) \\
& \leq \;\exp\Bigl[ - \sup_{t\in (0,1]}  \Bigl\{n^2 \bigl(xt-   \rho_n C_t(R,S)\bigr)\Bigr\} + ck^2\rho_n^2 nm + D\rho_n m^2 \Bigr]
\end{split}
\end{equation*}
with $C_t(R,S)$ given by 
\begin{equation}
C_t(R, S) = \sum_{a}\sum_{b\neq b'} [R^T\1]_{a}  K_t(S_{ab} \| S_{ab'})R_{b'b} 
\end{equation}
and $K_t$ given by \eqref{Kt}.
\end{proof}

\bibliographystyle{authordate1}
\bibliography{references}

\newpage

\noindent{\large \bf Suppementary material}


\section{General Auxiliary Results}\label{sec:modularities_relation}

In this section, we present the proof of Lemma~\ref{lemma-qml-qb}, relating the maximum and integrated conditional likelihood modularities, and we present other auxiliary results that are necessary to prove the weak and strong consistency of the proposed estimators.  

\begin{proof}[Proof of Lemma \ref{lemma-qml-qb}]
Observe that for any $\e\in [k]^n$ we have 
\begin{equation*}\label{razao_a|z}
\Q{\e}-\Qb{\e} \;=\; 
\frac{1}{n^2}\sum\limits_{1\leq a \leq b \leq k}  \log  Q(\tilde o_{ab}(\e,\a), \tilde n_{ab}(\e))  
\end{equation*}
with 
\[
 Q(o_{ab}(\e,\a), n_{ab}(\e))  \;=\;  \dfrac{ \Gamma\bigl(\frac12\bigr)^2\Gamma\Bigl(\tilde n_{ab}+1\Bigr)\Bigl( \dfrac{\tilde o_{ab}}{ \tilde n_{ab}} \Bigr)^{ \tilde o_{ab}}\Bigl(1-\dfrac{\tilde o_{ab}}{\tilde n_{ab}}\Bigr)^{ \tilde n_{ab}-\tilde o_{ab}}}{\Gamma\Bigl(\tilde o_{ab}+\frac{1}{2}\Bigr)\Gamma\Bigl( \tilde n_{ab}-\tilde o_{ab}+\frac{1}{2}\Bigr)}\,,
\]
where, for simplicity in the notation, we have omitted the argument $\e$ and $\a$ from the counts.
For each pair $a,b$, $1\leq a,b \leq k$, 
using Lemma 5.1 in \cite{cerqueira2024}, we get
\begin{equation*}
\begin{split}
\dfrac{\left( \dfrac{\tilde o_{ab}}{ \tilde n_{ab}} \right)^{ \tilde o_{ab}}\left(1-\dfrac{\tilde o_{ab}}{\tilde n_{ab}}\right)^{ \tilde n_{ab}-\tilde o_{ab}}}{\Gamma\left(\tilde o_{ab}+\frac{1}{2}\right)\Gamma\left( \tilde n_{ab}-\tilde o_{ab}+\frac{1}{2}\right)}\;\leq\; \frac{1}{\Gamma\left( \tilde n_{ab}+\frac{1}{2}\right)\Gamma\left(\frac{1}{2}\right)}\,.
\end{split}
\end{equation*}
Then  
\begin{equation*}
\begin{split}
& \Q{\e}-\Qb{\e}\; \leq\; 
\frac{1}{n^2}\sum\limits_{1\leq a \leq b \leq k}  \log\biggl( \frac{\Gamma(\frac12)\Gamma\left( \tilde n_{ab}+1\right)}{\Gamma\left( \tilde n_{ab}+\frac{1}{2}\right)} \biggr)\,.
\end{split}
\end{equation*}
Using the standard bound for the Gamma function
\begin{equation*}
x^{x-\frac{1}{2}}e^{-x}\sqrt{2\pi} \;\leq\; \Gamma(x)\; \leq\; x^{x-\frac{1}{2}}e^{-x}\sqrt{2\pi} e^{\frac{1}{12x}}
\end{equation*}
valid for $x>0$ we have that
\begin{equation*}
\begin{split}
&\log\biggl( \frac{\Gamma(\frac12)\Gamma\left( \tilde n_{ab}+1\right)}{\Gamma\left( \tilde n_{ab}+\frac{1}{2}\right)} \biggr)\; \leq\; (\tilde n_{ab}+\textstyle{\frac{1}{2}})\log\left(\tilde n_{ab}+1\right) 
+\frac{1}{12(\tilde n_{ab}+1)}
-n_{ab}\log\left(\tilde n_{ab}+\frac{1}{2}\right)+\frac{1}{2}\\
&\qquad\quad\leq \;  \textstyle\frac{1}{2} \log \tilde n_{ab} + (\tilde n_{ab}+\textstyle\frac{1}{2})\log(1+\frac{1}{\tilde n_{ab}})-\tilde n_{ab}\log(1+\frac{1}{2\tilde n_{ab}})+\textstyle{\frac{1}{2}+\frac{1}{12 \tilde n_{ab}}} \\
& \qquad\quad\leq\; \log n + 2\,,
\end{split}
\end{equation*}
where the last inequality follows by $1-\frac{1}{x} \leq \log x \leq x-1$ and $1\leq \tilde n_{ab} \leq n^2$. Using that $\mathbb Q(\a\mid \z) \leq \max_{P}\P(\a\mid \z)$ we obtain
\[
\Q{\e}-\Qb{\e} \;\geq\; 0
\]
and we conclude that 
\begin{equation*}
 |\,  \Q{\e}-\Qb{\e} \,|  \;\leq\; 
\frac{k(k+1)(\log n+2)}{2n^2} \;\leq\; 
\frac{k^2(\log n+2)}{n^2}\,.\qedhere
\end{equation*}
\end{proof}

The following two lemmas were originally stated and proved in \citet{van2018bayesian} and are included here with minor adaptations for completeness.

\begin{lemma}\label{lemma-basico-vandpass}
For every $\e,\z \in[k]^n$ we have that 
\[
\frac1n\sum_{i=1}^n \1\{e_i\neq z_i\} \;=\; \frac12\|\diag(R(\e,\z)^T\1) - R(\e,\z)\|_1\,,
\]
where $\diag(R(\e,\z)^T\1)$ is a $k\times k$ diagonal matrix with entries $[R(\e,\z)^T\1]_a$ on the diagonal for all $a\in [k]$. 
\end{lemma}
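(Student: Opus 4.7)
The plan is to expand both sides of the claimed identity in terms of the entries of the confusion matrix $R = R(\e,\z)$ and match them directly. The computation is elementary and does not require an outside result; the only mild point is keeping track of signs.

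First I would identify $\diag(R^T\1)$. By the definition of $R$ in \eqref{conf_mat}, the $a$-th entry of the vector $R^T\1$ equals $\sum_b R_{ba}(\e,\z) = \frac{1}{n}n_a(\z)$, so the matrix $D := \diag(R^T\1) - R$ has entries
\[
D_{aa} \;=\; [R^T\1]_a - R_{aa} \;=\; \sum_{b\neq a} R_{ba}, \qquad D_{ab} \;=\; -R_{ab} \quad (a\neq b).
\]
Because the entries of $R$ are nonnegative, the diagonal entries of $D$ are nonnegative and the off-diagonal entries are nonpositive, so taking absolute values introduces no cancellation.

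Next I would compute the entry-wise $1$-norm:
\[
\|D\|_1 \;=\; \sum_{a}\sum_{b\neq a} R_{ba} \;+\; \sum_{a\neq b} R_{ab} \;=\; 2\sum_{a\neq b} R_{ab}(\e,\z),
\]
where the two sums on the middle expression are equal after swapping the dummy indices. Finally, by the very definition of $R_{ab}$,
\[
\sum_{a\neq b} R_{ab}(\e,\z) \;=\; \frac{1}{n}\sum_{i=1}^n\sum_{a\neq b}\1\{e_i=a,\,z_i=b\} \;=\; \frac{1}{n}\sum_{i=1}^n\1\{e_i\neq z_i\}.
\]
Combining the last two displays yields the stated identity. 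I do not foresee any real obstacle here: the argument is a one-page bookkeeping exercise, with the only subtle point being the observation that the sign pattern of $D$ makes $|D_{aa}|$ itself equal to $\sum_{b\neq a} R_{ba}$, so that the diagonal contribution to $\|D\|_1$ duplicates the off-diagonal contribution and produces the factor $2$.
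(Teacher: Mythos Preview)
Your argument is correct: the direct expansion of $\diag(R^T\1)-R$ into its diagonal and off-diagonal parts, together with the observation that all entries of $R$ are nonnegative, yields $\|D\|_1=2\sum_{a\neq b}R_{ab}$, which is exactly twice the misclassification rate. The paper does not give an independent proof but simply cites \citet[Lemma~1]{van2018bayesian}; your computation is precisely the elementary bookkeeping that underlies that reference, so there is no substantive difference.
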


\begin{proof}
See Lemma~1 in \citet{van2018bayesian}. 
\end{proof}

\begin{lemma}\label{lemma-basico-vandpass0}
 Let $P=\rho_n S$, with $S$ fixed and $\rho_n=1$ or $\rho_n\to0$ as $n\to\infty$. Then if $\rho_n=1$ we have that
 \begin{equation}\label{sma1}
H_{P}(\diag(R^T\1)) - H_{P}(R)) \geq 0 
\end{equation}
and if $\rho_n\to\infty$
 \begin{equation}\label{sma3}
\frac{1}{\rho_n} \biggl( H_{\rho_n S}(\diag(R^T\1))  - H_{\rho_n S}(R))\biggr)\;\rightarrow\;G_{S}(\diag(R^T\1)) - G_{S}(R))\,,
\end{equation}
with
\[
G_S(R) \;=\; \sum_{a,b} [RSR^T]_{ab}  \gamma\Bigl( \frac{ [RSR^T]_{ab}}{ [R\1]_a [R\1]_b}\Bigr) 
\]
for $\gamma(x) = x\log(x) - x$. 
Furthermore, if $(S,\pi)$ is identifiable and the columns of $R$ corresponding to positive coordinates of $\pi$ are not identically zero, then the inequality in \eqref{sma1} is strict and the right hand side of \eqref{sma3} is also strictly positive, unless $D_\sigma R$ is a diagonal matrix for some permutation matrix $D_\sigma$.
\end{lemma}

\begin{proof}
The proof of \eqref{sma1} is given in \citet[Lemma 7]{van2018bayesian}. 
%
The proof of \eqref{sma3} is given in \citet[Lemma~8]{van2018bayesian}, but it holds for a matrix $S$ with values in $[0,1]$. But the statement is valid for any 
bounded matrix $S$, by considering $S'= \sma^{-1}S$ with vaues in $[0,1]$ and taking $\rho_n'= \sma \rho_n\to 0$ as $n\to\infty$. Then the resut is valid for $P=\rho_n'S' = \rho_n S$. 
\end{proof}

In the next result, we show that the functions $H_{P,n}$ and $H_P$ differ by a negligible term.

\begin{lemma}
\label{lem:hps}
If $\rho_n n \to\infty$ as $n\to\infty$, then for any $\delta>0$ we have that 
\[
|H_{P,n}(R) - H_P(R)| \leq \delta\rho_n
\]
uniformily over matrices $R$ with $[R\one]_a\geq (\rho_nn)^{-1}$, for $n$ sufficiently large. 
\end{lemma}

\begin{proof}
For any matrix $R\in \mathbb R^{k\times k_0}_{\geq 0}$
we  write 
\begin{align*}
H_{P,n}(R) &= \frac 1 2\sum_{1\leq a,b\le k} [R\one]_a \Bigl(  [R\one]_b -  \frac{\delta_{ab}}{n} \Bigr)\tau([P_R]_{ab})
\end{align*}
with $\tau(x) = x\log(x) + (1-x)\log(1-x)$, $x\in[0,1]$, $[P_R]_{ab}$ given by \eqref{pmixt}
and
\begin{align*}
H_{P}(R) &= \frac 1 2\sum_{1\leq a,b\le k} [R\one]_a [R\one]_b \tau\left(\frac{[ RPR^\intercal]_{ab}}{[R\one]_a[R\one]_b}\right).
\end{align*}
Then, we obtain that 
\begin{equation}
\begin{split}
H_{P,n}(R) - H_{P}(R) &=  \frac 1{2} \sum_{1\leq a \leq k}  [R\one]^2_a \left( \tau\left([P_R]_{aa}\right)-\tau\left(\frac{[ RPR^\intercal]_{aa}}{[R\one]_a[R\one]_a}\right)\right)\\
&\quad + \frac{1}{n}\sum_{1\leq a \leq k}  [R\one]_a  \tau\left([P_R]_{aa}\right).
\end{split}
\end{equation}
As $\tau$ is a bounded function and $n^{-1}\ll \rho_n$ we obtain that the second term can be bounded by $\delta\rho_n$ uniformily on $R$, for any $\delta>0$. For the first term we use  the Taylor expansion of the function $\tau$ when $\rho_n\to0$, given by 
\begin{equation*}
\tau(\rho_n x) \;=\; \rho_n \gamma(x) +  x \rho_n \log\rho_n + O\bigl(\rho_n^2x^2\bigr)
\end{equation*}
with
\[
\gamma(x) \;=\; x\log x - x\,.
\]
Using that $[P_R]_{aa}$ and ${[ RPR^\intercal]_{aa}}/[R\one]_a[R\one]_a$ belong to the interval  $(\rho_n\smi,\rho_n\sma)$ and $\gamma$ is Lipschitz on any bounded interval, there exists a constant $0< M< \infty$ such  that 
\begin{equation}\label{eq:dif_tau_H}
\begin{split}
\Bigl| \tau\left([P_R]_{aa}\right)-&\tau\left(\frac{[ RPR^\intercal]_{aa}}{[R\one]_a[R\one]_a}\right) \Bigr| \\
&\leq \bigl( M\rho_n + \rho_n \log \rho_n \bigr) \biggl|  \frac{[RSR^\intercal]_{aa}- \sum_{b}S_{bb}R_{ab}/n}{[R\one]_a([R\one]_a-1/n)}- \frac{[ RSR^\intercal]_{aa}}{[R\one]_a[R\one]_a} \biggr| .
\end{split} 
\end{equation}
As $\rho_n\log\rho_n \to0 $ as $n\to\infty$, for any $\delta>0$ we obtain that 
\[
M\rho_n + \rho_n \log \rho_n <\delta
\]
for $n$ sufficiently large. Moreover, by summing and subtracting the term 
\[
\frac{[RSR^\intercal]_{aa}}{[R\one]_a([R\one]_a-1/n)}
\]
to the expression inside the absolute value on the right-hand side of \eqref{eq:dif_tau_H}, we can bound 
\begin{equation}
\begin{split}
\biggl|  \frac{[RSR^\intercal]_{aa}- \sum_{b}S_{bb}R_{ab}/n}{[R\one]_a([R\one]_a-1/n)}- \frac{[ RSR^\intercal]_{aa}}{[R\one]_a[R\one]_a} \biggr| \;&\leq\; \frac{2\sma}{n([R\one]_a-1/n)}\\
& \leq\; C \rho_n,
\end{split}
\end{equation}
with $C<\infty$, provided $[R\one]_a\geq  (\rho_n n)^{-1}$ for all $a\in[k]$. Then we  obtain for any $\delta>0$ that 
\[
| H_{P,n}(R) - H_{P}(R) | \leq \delta\rho_n
\]
uniformly over matrices with $[R\one]_a\geq (\rho_n n)^{-1}$ for all $a\in[k]$, for $n$ large enough.
\end{proof}

In the work of \citet{van2018bayesian}, it was sufficient for the authors to show that the limit of the difference in $H_{\rho_n S, n}$, computed for the diagonal matrix $\operatorname{diag}(R^\top \mathbf{1})$ and the confusion matrix $R$, is bounded below by a positive constant. In our case, since we are interested in obtaining the phase transition threshold for strong consistency, it is necessary to derive an explicit formula for this constant. The following result extends Lemma 9 from \citet{van2018bayesian} to derive an explicit expression for the lower bound.

\begin{lemma}\label{lemma-rate-nosso}
Let $P=\rho_n S$ be symmetric, with $S>0$.  Let $\epsilon>0$ and $R$ such that $\|R-\diag(\pi)\|_1 < \delta$, with $\delta$ sufficiently small. 
If  $\rho_n=1$ we have that
\begin{equation}\label{vdpeq1}
\begin{split}
H_{P,n}(\diag(R^T\1)) - H_{P,n}(R) \;\geq \;&\sum_{b\neq b'}  \Bigl(  \sum_{a} [R^T\1]_{a} KL(P_{ab}\| P_{ab'}) - \epsilon\Bigr)R_{b'b} ,
\end{split}
\end{equation}
where $KL$ is the \emph{Kullback-Leibler} divergence defined by $KL(p\|q) = p \log (\frac{p}{q}) + (1-p)\log(\frac{1-p}{1-q})$. If $\rho_n\to0$ as $n\to\infty$ we obtain the  bound 
\begin{equation}\label{vdpeq2}
\begin{split}
H_{\rho_n S,n}(\diag(R^T\1)) - H_{\rho_n S,n}(R) \;\geq \;& \rho_n \sum_{b\neq b'}  \Bigl(  \sum_{a} [R^T\1]_{a} K_1(S_{ab}\| S_{ab'}) - \epsilon\Bigr)R_{b'b}  
\end{split}
\end{equation}
with $K_1(p\|q) = p \log (\frac{p}{q}) +  q - p$.
\end{lemma}

\begin{proof}[Proof of Lemma~\ref{lemma-rate-nosso}]
By a lengthy calculation, \citet[Lemmas 9-10]{van2018bayesian} proved that the left hand side of \eqref{vdpeq1} equals
\[
- \nabla G(0)^T \lambda - \int_0^1 ( \nabla G(s\lambda) -  \nabla G(0))^T ds\, \lambda
\]
with $\lambda$ a vector defined by $\lambda_{b'b}= R_{b'b}$ for $b\neq b'$ and 
\[
G(\lambda)\;=\; H_{P,n}\Bigl(  \diag(R^T\1) + \sum_{b\neq b'}\lambda_{b'b}\Delta_{b'b}    \Bigr)\,
\]
for some given  matrices $\Delta_{b'b}$. Then the authors proved that 
\[
\frac{\partial}{\partial \lambda_{b'b}} G(\lambda)|_{\lambda=0} \;=\; - \sum_{a} [R^T\1]_{a} KL(P_{ab}\| P_{ab'}) + \frac{1}{2n}KL(P_{bb}\|P_{b'b'})\,
\]
for $KL(s\|t) = s\log (s/t) + (1-s)\log((1-s)/(1-t))$. Then 
\begin{align*}
- \nabla G(0)^T \lambda \;&=\;  \sum_{b\neq b'}  \Bigl(  \sum_{a} [R^T\1]_{a}KL(P_{ab}\| P_{ab'}) -  \frac{1}{2n}KL(P_{bb}\|P_{b'b'})\Bigr)R_{b'b}\\
&\geq\;  \sum_{a}\sum_{b\neq b'}  [R^T\1]_{a}KL(P_{ab}\| P_{ab'})R_{b'b} -  \frac{\max_{b\neq b'} KL(P_{bb}\|P_{b'b'}) }{2n} \sum_{b\neq b'} R_{b'b}.
\end{align*}
%
On the other hand,  as shown by \citet[Lemmas 9-11]{van2018bayesian}, as $\rho_n^{-1}\|\nabla G(\lambda)- \nabla G(0)\|$ becomes uniformly small as $\lambda$ is 
close enough to zero, for $\rho_n=1$ or $\rho_n\to 0$ as $n\to\infty$, then  
\begin{align*}
\Bigl| \int_0^1 ( \nabla G(s\lambda) -  \nabla G(0))^T ds\, \lambda \Bigr| \;&\leq\;  \epsilon \rho_n  \sum_{b\neq b'} R_{b'b}. 
\end{align*}
Therefore, by seeing that 
\[
\max_{b\neq b'} KL(P_{bb}\|P_{b'b'}) \leq C\rho_n
\]
with $C$ a constant not depending on $n$, and for $\rho_n=1$ or $\rho_n\to 0$ as $n\to\infty$, we obtain that 
\begin{align*}
H_{P,n}(\diag(R^T\1)) - H_{P,n}(R) \;\geq \;  & \sum_{b\neq b'} \Bigl(  \sum_{a}  [R^T\1]_{a}KL(P_{ab}\| P_{ab'}) -\epsilon\rho_n\Bigr)R_{b'b}.
\end{align*}
In the case $\rho_n\to0$ as $n\to\infty$ we have that 
\[
KL(\rho_n p\| \rho_n q) \;\geq \;  \rho_n K_1(p\|q) - c \rho_n^2
\]
with $K_1(p\|q)=p\log(p/q) + q - p$. Therefore for $P=\rho_nS$ with $\rho_n\to0$ as $n\to\infty$ we obtain that 
\begin{align*}
H_{\rho_n S,n}(\diag(R^T\1)) - H_{\rho_nS,n}(R) \;\geq \;  &\rho_n\sum_{b\neq b'}  \Bigl( \sum_{a}[R^T\1]_{a}K_1(S_{ab}\|S_{ab'}) - \epsilon \Bigr) R_{b'b}
\end{align*}
for $\epsilon$ arbitrarily small and $n$ sufficiently large. 
\end{proof}

\section{Concentration results for the dense case}\label{sec:dense}

In the dense case ($\rho_n=1$), Theorems \ref{theoremX2} and \ref{lemma-chernoff} are not applicable and must be adjusted accordingly. To do so, the key quantities $C_t(R,S)$ and $K_t$, obtained for the sparse case, are replaced by
\begin{equation}\label{tildeC_denso}
C'_t(R,S) = \sum_{a}\sum_{b\neq b'} [R^T\1]_{a}  D_t(S_{ab} \| S_{ab'})R_{b'b}\,,
\end{equation}
with 
\begin{equation}\label{Kt_denso}
D_t(p \|q) \;=\;   \log\left(p\left(\frac{q(1-p)}{p(1-q)} \right)^{t} +1-p\right)  + pt\log \frac{p(1-q)}{q(1-p)} ,
\end{equation}
for $p,q\in [0,1]$.
Observe that $D_1(p \|q)=KL(p\|q)$.

\begin{theorem}\label{theoremX2_denso}
Let $(\Z,\A)$ be generated from a SBM with parameters $(\pi, \rho_nS)$, with $\rho_n=1$. Let $\z\in [k]^n$ be such that  $n_{a}(\z) \geq \pi_{\min}n/2$ for all $a\in[k]$ and $\e\in \F$. Then, for all $\epsilon>0$, there exists $\tilde\epsilon>0$ such that, if   $m=m(\z,\e)\leq \tilde\epsilon n$ and  
$n$ is sufficiently large, we have that
\begin{align*}
\P( \{ X(\e) - X(\Z)  >  x\} &\cap \mathcal C \mid \Z=\z) \\
&\leq 
 \exp\Bigl[  - \sup_{t\in(0,1]} \Bigl\{  n^2\bigl(x t -  C'_t(R,S)\bigr)\Bigr\} +\epsilon mn\Bigr]
\end{align*}
with $C'_t(R,S)$ given by \eqref{tildeC_denso}.
\end{theorem}

\begin{proof}
Along the same lines as the proof of Theorem \ref{theoremX2}, the difference $X(\e) - X(\z)$  can be upper bounded by 
\begin{equation}\label{XXX_denso}
\begin{split}
 \frac12\sum_{1\leq a, b\leq k}\, &W_{ab}(\e,\z) \log  \frac{S_{ab}}{1-S_{ab}}\\
& + \frac12\sum_{1\leq a, b\leq k} \frac{1}{ S_{ab}} \; \Bigl|\E\Bigl(\dfrac{\o{a}{b}(\e)}{n^2}\Bigr) - \frac{\o{a}{b}(\e)}{n^2}\Bigr|\,
\Bigl| \dfrac{\o{a}{b}(\e)}{\n{a}{b}(\e)} - S_{ab}\Bigr|\\
& + \frac12\sum_{1\leq a, b\leq k} \frac{1}{1- S_{ab} } \; \Bigl|\E\Bigl(\dfrac{\o{a}{b}(\e)}{n^2}\Bigr) - \frac{\o{a}{b}(\e)}{n^2}\Bigr|\,
\Bigl| \dfrac{\o{a}{b}(\e)}{\n{a}{b}(\e)} - S_{ab}\Bigr|.
\end{split}
\end{equation}
Since the entries of $S$ are bounded, $S_{ab}$ and $1-S_{ab}$ are bounded below by a constant, for all $a,b\in [k]$. Then, we can bound \eqref{XXX_denso} from below by a sum of two terms 
$W(\e,\z) + \widetilde W(\e,\z)$ with 
\[
W(\e,\z) = \frac12 \sum_{1\leq a,b\leq k} W_{ab}(\e,\z)  \log  \frac{ S_{ab}}{1- S_{ab}}
\]
and
\[
\widetilde W(\e,\z) = C\sum_{1\leq a, b\leq k}  \; \Bigl|\E\Bigl(\dfrac{\o{a}{b}(\e)}{n^2}\Bigr) - \frac{\o{a}{b}(\e)}{n^2}\Bigr|\,
\Bigl| \dfrac{\o{a}{b}(\e)}{\n{a}{b}(\e)} - S_{ab}\Bigr|.
\]
The result follows along the same lines as Theorem \ref{theoremX2}, by Proposition \ref{lemma-chernoff2} and Proposition \ref{lemma-chernoff_denso}, proved in the sequel.
\end{proof}

\begin{proposition}\label{lemma-chernoff_denso}
Let $(\Z,\A)$ be generated from a SBM with parameters $(\pi, \rho_nS)$ with $\rho_n=1$. Then,
for all $\z, \e \in [k]^n$  and for 
\begin{align*}
W(\e,\z) = & \frac12 \sum_{1\leq a, b\leq k}\, W_{ab}(\e,\z)\log \frac{S_{ab}}{1-S_{ab}}
\end{align*}
we have that 
\begin{equation*}
\begin{split}
 \P\Bigl(\,  & W(\e,\z)  \; >\; x\,|\,\Z=\z  \, \Bigr) \;\leq\;  \exp\Bigl[ - \sup_{t\in (0,1]} \Bigl\{ n^2 (xt-   C'_t(R,S))\Bigl\}  + Fm^2\Bigr]\,,
\end{split}
\end{equation*}
with  $C'_t(R,S)$ given by \eqref{tildeC_denso} and $F$ a constant only depending on $S$.
\end{proposition}

\begin{proof}
Observe that  we can write
\begin{equation*}
\begin{split}
W(\e,\z) = \frac1{2n^2} \sum_{1\leq a, b\leq k}\,\,\sum_{1\leq i,j\leq n} [ Y_{ij}^{ab} - \E(Y_{ij}^{ab}) ]
\end{split}
\end{equation*}
with 
\[
Y_{ij}^{ab} = \log  \frac{S_{ab}}{1-S_{ab}}\Bigl( \mathds{1}\{e_i=a,e_j=b\}A_{ij}\, - \mathds{1}\{z_i=a,z_j=b\}A_{ij} \Bigr)\,.
\]
Taking
\[
Y_{ij} = \sum_{1\leq a, b\leq k}Y_{ij}^{ab} 
\]
we can write
\[
Y_{ij} = \log\Bigl( \frac{S_{e_ie_j}(1- S_{z_iz_j})}{S_{z_iz_j}(1- S_{e_ie_j}) }\Bigr) A_{ij}\,.
\]
Conditioned on $\Z=\z$, the variables $Y_{ij}$, for $1\leq i<j\leq n$ are independent, $Y_{ji} = Y_{ij}$ and $Y_{ii}=0$ for all $i=1,\dots,n$. 
Then applying Chernoff's bound, we  obtain that 
\begin{align*}
\P( \,W(\e,\z) > x \mid &\Z=\z)=  \P\Bigl( \,\sum_{1\leq i<j\leq n} Y_{ij} - \E(Y_{ij}) > n^2x\,|\,\Z=\z\Bigr) \\
&\leq\; \inf_{t\in (0,1]} \;\Bigl\{ \exp\bigl(- n^2xt  \bigr)\prod_{1\leq i<j\leq n}\E\bigl( \exp[ t(Y_{ij}-\E(Y_{ij}))]\bigr)\Bigr\}
\end{align*}
where the expectations are given $\Z=\z$. First, observe that
\[
\E(tY_{ij}\mid \Z=\z) = tS_{z_iz_j}\log\Bigl( \frac{S_{e_ie_j}(1- S_{z_iz_j})}{S_{z_iz_j}(1- S_{e_ie_j}) }\Bigr)\,.
\]
On the other hand
\begin{align*}
\E(\exp(tY_{ij})\mid \Z=\z) \;&=\; S_{z_iz_j} \exp\Bigl(t\log\Bigl( \frac{S_{e_ie_j}(1- S_{z_iz_j})}{S_{z_iz_j}(1- S_{e_ie_j}) }\Bigr)
\Bigr)+  1 - S_{z_iz_j}\\
&= S_{z_iz_j} \Bigl( \frac{S_{e_ie_j}(1- S_{z_iz_j})}{S_{z_iz_j}(1- S_{e_ie_j} }\Bigr)^t+  1 - S_{z_iz_j}\\
&= \exp\left\lbrace \log \left(  S_{z_iz_j} \Bigl( \frac{S_{e_ie_j}(1- S_{z_iz_j})}{S_{z_iz_j}(1- S_{e_ie_j} }\Bigr)^t+  1 - S_{z_iz_j}\right)  \right\rbrace.
\end{align*}
Combining the two expected values we obtain
\begin{align*}
\E(\exp[t (Y_{ij} - \E(Y_{ij}\mid \Z=\z)]\mid\Z=\z) &= D_t(S_{z_iz_j} \| S_{e_ie_j})
\end{align*}
where $D_t(p \|q)$ is given by \eqref{Kt_denso}, for $t \in (0,1]$ and $p,q>0$. Thus,
\begin{equation}\label{ineqchernoff_denso}
\begin{split}
\P( W(\e,\z) > x\mid\Z=\z) &\leq\inf_{t\in (0,1]} \Bigl\{ \exp\Bigl(  - n^2x t + \sum_{1\leq i<j\leq n} D_t(S_{z_iz_j} \| S_{e_ie_j})\Bigr)\Bigr\}\\
&= \exp\Bigl( - \sup_{t\in(0,1]} \; \Bigl\{ n^2x  t -  \sum_{1\leq i<j\leq n}D_t(S_{z_iz_j} \| S_{e_ie_j})  \Bigr\}\Bigr)\,.
\end{split}
\end{equation}

We have that $D_t(p\|q)\geq 0$ since the logarithm function is concave  and $D_t(p\|q)=0$ if and only if $p=q$. 
Using that matrix $S$ is symmetric and denoting $R=R(\e,\z)$ we can write 
\begin{equation}\label{Ktineq_denso}
\begin{split}
\sum_{1\leq i<j\leq n}D_t(S_{z_iz_j} \| S_{e_ie_j})\;&=\;  \frac{n^2}2\;\sum_{a',a}\sum_{b',b} R_{a'a}D_t(S_{ab} \| S_{a'b'})R_{b'b}\\
&=\;  n^2\;\sum_{b\neq b'}\sum_{a} R_{aa}D_t(S_{ab} \| S_{ab'})R_{b'b} \\
&\quad + \frac{n^2}2\;\sum_{a\neq a'}\sum_{b\neq b'} R_{a'a}D_t(S_{ab} \|S_{a'b'})R_{b'b}.
\end{split} 
\end{equation}
Now, the proof finishes by observing that $D_t(S_{ab},S_{ab'})\leq F$, with $F$ a constant only depending on $S$ and $R_{aa}\leq [R^T\one]_a$ for all $a\in[k]$.
Therefore by \eqref{ineqchernoff_denso} and \eqref{Ktineq_denso} we obtain that 
\begin{equation*}
\P\Bigl(W(\e,\z) >x\mid\Z=\z\Bigr) \leq \exp\Bigl[ - \sup_{t\in (0,1]}  \Bigl\{n^2 \bigl(xt-   C'_t(R,S)\bigr)\Bigr\} + F m^2 \Bigr]
\end{equation*}
with $C'_t(R,S)$ given by \eqref{tildeC_denso} with $D_t$ given by \eqref{Kt_denso}.
\end{proof}


\end{document}